\documentclass[11pt,a4paper]{article}

\usepackage[a4paper,margin=1in]{geometry}
\usepackage[utf8]{inputenc}
\usepackage{subfiles}
\usepackage{mathtools}
\usepackage[thinc]{esdiff}
\usepackage{array}
\usepackage{gensymb}
\usepackage{amssymb}
\usepackage{amsmath, bm}
\usepackage{amsthm}
\usepackage{amsfonts}
\setcounter{MaxMatrixCols}{20}
\usepackage{braket}
\usepackage[super]{nth}
\usepackage{caption}
\usepackage{subcaption}
\allowdisplaybreaks
\usepackage{placeins}
\usepackage{float}
\setcounter{tocdepth}{2}
\usepackage{tikz}
\usepackage{breqn}
\usepackage{hyperref}
\usepackage{bm}
\usetikzlibrary{shapes.geometric, arrows}
\usetikzlibrary{trees}
\usepackage[toc,page]{appendix}
\usetikzlibrary{quantikz2}
\usetikzlibrary{calc}
\usepackage{tikz-cd}
\usepackage{yfonts}
\usepackage{paralist}
\usepackage{circuitikz}
\usetikzlibrary{shapes.geometric}
\usepackage{relsize}
\usetikzlibrary{decorations.markings}
\usepackage{algorithm}
\usepackage{algpseudocode}
\usepackage[utf8]{inputenc}
\usepackage{physics}
\usepackage{nicematrix}
\usepackage{rotating}
\usepackage{booktabs}
\usepackage{subcaption}   % for subtable
\usepackage{siunitx}
\usepackage{comment}

\usepackage[style=numeric, backend=biber,maxbibnames=99]{biblatex} 
\addbibresource{lowrank.bib}

\newcommand{\R}{\mathbb R}

\theoremstyle{definition}
\newtheorem{theorem}{Theorem}[section]
\newtheorem{corollary}{Corollary}[theorem]
\newtheorem{lemma}[theorem]{Lemma}
\newtheorem{proposition}[theorem]{Proposition}

\newtheorem{example}{Example}[section]
\newtheorem{definition}{Definition}[section]

\setlength{\parindent}{0pt}
\setlength{\parskip}{.6em}

\title{Global optimization of low-rank polynomials}
\author{Lloren\c{c} Balada Gaggioli \thanks{
LAAS - CNRS, Université de Toulouse, France. Emails: llorenc.balada.gaggioli@fel.cvut.cz, henrion@laas.fr, korda@laas.fr}\textsuperscript{\;\;,}\thanks{
Faculty of Electrical Engineering, Czech Technical University in Prague, Czechia}  
\and Didier Henrion\footnotemark[1]\textsuperscript{\;\;,}\footnotemark[2]
\and Milan Korda\footnotemark[1]\textsuperscript{\;\;,}\footnotemark[2]}

\date{\today}

\usepackage{xcolor}
\definecolor{dkblue}{rgb}{0,0,0.8}

\begin{document}

\maketitle

\begin{abstract}
    This work considers polynomial optimization problems where the objective admits a low-rank canonical polyadic tensor decomposition. We introduce LRPOP (low-rank polynomial optimization), a new hierarchy of semidefinite programming relaxations for which the size of the semidefinite blocks is determined by the canonical polyadic rank rather than the number of variables. As a result, LRPOP can solve low-rank polynomial optimization problems that are far beyond the reach of existing sparse hierarchies. In particular, we solve problems with up to thousands of variables with total degree in the thousands. Numerical conditioning for problems of this size is improved by using the Bernstein basis. The LRPOP hierarchy converges from below to the global minimum of the polynomial under standard assumptions.
\end{abstract}

\section{Introduction}

Polynomial optimization problems (POPs), which involve minimizing a polynomial function subject to polynomial constraints, are fundamental in many fields. However, they are generally non-convex and difficult to solve globally. A powerful approach to solving these problems is the \emph{moment-sum-of-squares (SOS) hierarchy}, a.k.a. the Lasserre hierarchy, proposed originally in \cite{Lasserre2001}. This method recasts the non-convex POP into a sequence of convex semidefinite programming (SDP) relaxations of increasing size. 
The quality of the relaxation, and its computational cost, is governed by a parameter called the relaxation order. The primal SDP problems are relaxations of the moment formulation of the POP, whereas the dual SDP problems generate guaranteed bounds on the global minimum from polynomial SOS decompositions of increasing degrees. Under mild assumptions, the sequence of SOS bounds converges to the true global optimum of the original problem, and the minimizers can be extracted from the moments.
See e.g. the recent overviews \cite{Henrion2020, Nie2023, Theobald2024} and references therein. 

Despite its theoretical convergence, the primary drawback of the moment-SOS hierarchy is its scalability. The size of the largest semidefinite block in the SDP relaxations (corresponding to the moment matrix in the primal or the Gram matrix for the SOS decomposition in the dual) grows rapidly. This size scales polynomially in the relaxation order, but the exponent in this scaling is the number of variables of the POP, rendering the method computationally infeasible for problems with a moderate number of variables. To address this scalability challenge, significant research has focused on exploiting problem structure, most notably sparsity. One prominent approach is \emph{correlative sparsity}. This technique constructs a graph where vertices represent the problem variables, and an edge connects two vertices if their product appears in a common monomial in the problem data. By computing a chordal extension of this graph, one can find a clique tree decomposition satisfying the running intersection property (RIP). This concept was originally developed for general sparse SDPs, building on matrix completion theory \cite{Grone1984}, see the survey \cite{Vandenberghe2015}. It has been adapted e.g. for operator-splitting and first order optimization algorithms \cite{Zheng2020}.

When applied to the moment-SOS hierarchy, this correlative sparsity pattern allows for a significant reduction in complexity \cite{Lasserre2006, Waki2006}, provided the cliques are small. Instead of a single, large semidefinite block corresponding to all POP variables, the problem is decomposed. In the primal formulation, this involves introducing a separate measure for each clique in the graph decomposition. Linear constraints are then enforced to ensure that the marginal moments of these measures are consistent on the intersections of the cliques. This decomposition effectively replaces the single large SDP block with multiple, smaller SDP blocks, one for each clique. Chordality of the graph ensures that the running intersection property holds and that the Correlative Sparse moment-SOS hierarchy (CSSOS), converges \cite{Lasserre2006, Grimm2007}. This structured approach has been successfully implemented in software packages like SparsePOP \cite{Waki2008}. It has been extended and applied to various problems, such as minimizing rational functions in computer vision \cite{Bugarin2015} or structural engineering design \cite{Handa2025}. Further refinements have been explored, such as decomposed structured subsets \cite{Miller2022} and extensions to polynomial matrix inequalities \cite{Zheng2023}.

A more recent approach is \emph{term sparsity}, which is detailed in \cite{Magron2023}. In this framework, the graph vertices correspond to the monomials (terms) appearing in the problem data, rather than the variables. An adjacency graph is constructed based on the terms arising in the input polynomials. This method,  referred to as the TSSOS (Term Sparsity SOS), generates a new converging hierarchy of SDP relaxations \cite{Wang2021}. The key feature is the iterative construction of block-diagonal matrices by completing connected components of these term-based graphs. This hierarchy of hierarchies has been implemented in libraries such as TSSOS for Julia \cite{Magron2021} and it has seen various extensions. These include refinements to reduce block sizes using combinatorial optimization \cite{Shaydurova2025}, the development of minimal sparsity frameworks for specific applications like the AC-OPF problem \cite{LeFranc2024}, and hybrid approaches that combine both correlative and term sparsity (CS-TSSOS) to exploit all available problem structures \cite{Wang2022}.

In this paper we propose a new kind of sparsity in polynomial optimization which is motivated by tensor decomposition techniques: \emph{low-rank sparsity}.
A polynomial $p \in \R[\mathbf{x}]$ in the $n$ variables $\mathbf{x}=(x_1, \ldots, x_n)$ has rank $r$ if it can be written as
\[
f(\mathbf{x}) = \sum_{l=1}^r \prod_{i=1}^n f_{l,i}(x_i)
\]
where each $f_{l,i} \in \R[x_i]$ is a univariate polynomial in the scalar variable $x_i$.
We say that $f$ has low-rank if $r$ is significantly smaller than $n$.

This representation is a direct application of a fundamental concept from multilinear algebra. A polynomial can be viewed as a \emph{tensor} within a high-dimensional space formed by the tensor product of spaces of univariate polynomials (e.g., $V_1 \otimes V_2 \otimes \dots \otimes V_n$, where $V_i = \mathbb{R}[x_i]$). In this framework, a polynomial is rank-one or separable if it can be written as a single product of univariate functions, i.e. $\prod_{i=1}^n f_{l,i}(x_i)$. Consequently, the definition of a rank-$r$ polynomial is an exact statement that $f$ can be decomposed into a sum of $r$ separable (rank-one) tensors. The smallest such $r$ is known as the tensor rank of $f$. This decomposition is not unique and it is widely known in the literature as the Canonical Polyadic (CP) decomposition, also called CANDECOMP (Canonical Decomposition) and PARAFAC (Parallel Factors Analysis), see \cite{kolda2009tensor} for a comprehensive overview of tensor decompositions and their applications. The premise of low-rank polynomial optimization is therefore to exploit cases where the polynomial $f$, when viewed as a tensor, has a rank $r$ that is significantly smaller than its ambient dimensions. The CP decomposition has a wide range of applications, from neuroscience \cite{cp_neuroscience} to the multiparticle Schrödinger operator \cite{cp_schrodinger}, and in this work we apply it to efficient global optimization of polynomials.

Unlike matrices, the best rank-$r$ approximation to a given tensor can fail to exist (sequences of factors can diverge while the fit improves). Regularization, warm starts, or switching formats typically resolves this in practice \cite{desilva2008}.
Mature toolboxes make these models practical: \textsc{Tensorlab} (Matlab), the Matlab \textsc{Tensor Toolbox}, and \textsc{TensorLy} (Python) provide CP/Tucker/TT implementations with many options \cite{tensorlab,baderkolda_toolbox,tensorly}. 

The main contribution of this paper is to introduce LRPOP, a moment-SOS hierarchy for global minimization of low-rank polynomials.
The outline of the paper is as follows. In Section \ref{sec:momsos} we recall the standard (dense) moment-SOS hierarchy for POP and its sparse variants CSSOS and TSSOS. These are based on standard graph theory concepts that we recall. In Section \ref{sec:lrpop} we introduce the LRPOP hierarchy, prove its convergence, and show that it scales linearly with respect to the number of variables.
Numerical experiments and comparisons with existing sparse hierarchies are reported in Section \ref{sec:examples}. We show that we can solve routinely low-rank POPs with number of variables and degree scaling by the thousands, provided the polynomial exhibits good numerical conditioning. Extensions to other tensor factorizations and rational optimization are described in the concluding Section \ref{sec:conclusion}.

\section{Moment-SOS hierarchy and sparsity}\label{sec:momsos}

\noindent\textbf{Sums of squares.} 
Let $\mathbf{x}=(x_1,\dots,x_n)$ be a tuple of variables and $f(\mathbf{x})$ a polynomial in $\R[\mathbf{x}]$. We say a polynomial is a \emph{sum of squares (SOS)} if there exist polynomials $q_1(\mathbf{x}),\dots,q_L(\mathbf{x})$ such that
\begin{equation}
    f(\mathbf{x})=\sum_{i=1}^L q_i(\mathbf{x})^2.
\end{equation}
If we can write a polynomial in this form, we automatically have a certificate of non-negativity. If $f(\mathbf{x})$ is a polynomial of degree $2d$, then we can define the vector of monomials of degree at most $d$ as $\mathbf{z}_d(\mathbf x)$. Then, checking the existence of the SOS decomposition is equivalent to checking if there exists a positive semi-definite (PSD) matrix $Q$, called the Gram matrix, satisfying
\begin{equation}
    f(\mathbf{x})=\mathbf{z}_d^T(\mathbf x) Q \mathbf{z}_d(\mathbf x).
\end{equation}

\medskip
\noindent\textbf{Moments.}
Take a monomial basis $\{\mathbf{x}^{\bm{\alpha}}\}$ of $\R[\mathbf{x}]$ indexing a sequence $\mathbf{y}=  (y_{\bm{\alpha}})$, with $\bm \alpha \in \mathbb N^n$ a vector of powers. Now define the linear functional $L_{\mathbf{y}}:\R[\mathbf{x}]\rightarrow \R$ as
\begin{equation}
    f(\mathbf{x})=\sum_{\bm{\alpha}}f_{\bm{\alpha}}\mathbf{x}^{\bm{\alpha}} \mapsto L_{\mathbf{y}}(f(\mathbf{x}))=\sum_{\bm{\alpha}} f_{\bm{\alpha}}y_{\bm{\alpha}}.
\end{equation}

We define the moment matrix $M_{k}(\mathbf{y})$ as the matrix indexed by the monomials up to degree $k$, such that the matrix entries are 
\begin{equation}
    M_{k}(\mathbf{y})_{\bm{\beta \gamma}}=L_{\mathbf{y}}(\mathbf{x}^{\bm{\beta}}\mathbf{x}^{\bm{\gamma}})=\mathbf{y}_{\bm{\beta}+\bm{\gamma}}.
\end{equation}

Similarly, with $g = \sum_{\bm{\alpha}}g_{\bm{\alpha}}x^{\bm{\alpha}}$, we can define the localizing matrix $M_k(g\mathbf{y})$ as
\begin{equation}
    M_{k}(g\mathbf{y})_{\bm{\beta \gamma}}=L_{\mathbf{y}}(g\mathbf{x}^{\bm{\beta}}\mathbf{x}^{\bm{\gamma}})=\sum_{\bm{\alpha}}g_{\bm{\alpha}}\mathbf{y}_{\bm{\alpha}+\bm{\beta}+\bm{\gamma}}.
\end{equation}

\subsection{Dense hierarchy}
Let us consider the polynomial optimization problem (POP)
\begin{equation}\label{eq:pop}
\begin{split}
\inf_{\mathbf{x}\in\R^n} \quad &f(\mathbf{x})\\
 \text{s.t.} \quad &g_{j}(\mathbf{x})\geq0, \quad j=1,\dots, m.
\end{split}
\end{equation}

We can rewrite the problem as 
\begin{align}
    (P):\quad p^* &= \inf \, \{ f(\mathbf{x}) \,\,| \,\, \mathbf{x} \in K \},
\end{align}

for the basic semialgebraic set $K = \{\mathbf{x} \in \mathbb{R}^n \,\,| \,\,  g_j(\mathbf{x}) \geq 0 \ \}$. Equivalently, we can also write the dual of problem $(P)$ as
\begin{align}
    (D):\quad \lambda^* = \sup \{\lambda \,\,|\,\, f(\mathbf{x})-\lambda \geq 0, \, \forall \mathbf{x}\in K \ \}.
\end{align}

Letting $g_0=1$, the moment relaxation of the POP \eqref{eq:pop} is the primal SDP problem
\begin{equation}\label{eq:moment-sdp}
	\begin{split}
		(P_k): \quad p_k= \inf_{\textbf{y}}  \quad & L_y(f) \\
		\text{s.t.} \quad & L_y(1)=1\\
		& M_{k-d_j}(g_j y)\succeq 0, \quad j=0,\dots,m;
	\end{split}
\end{equation}
where $d_j=\lceil\frac{\deg(g_j)}{2}\rceil$ and  $d\geq\max(\lceil\frac{\deg(f)}{2}\rceil,d_1,\dots,d_m)$. Then, the dual SDP yields the SOS optimization problem
\begin{equation}\label{eq:sos-sdp}
\begin{split}
(D_k): \quad \lambda_k=\sup_{\lambda , Q_j} \quad & \lambda \\
\text{s.t.} \quad & f(\mathbf{x})-\lambda = \sum_{j=0}^m{\sigma_j(\mathbf{x})g_j(\mathbf{x})}\\
 & \sigma_j(\mathbf{x})=\mathbf{z}_{k-d_j}(\mathbf{x})^T Q_j \mathbf{z}_{k-d_j}(\mathbf{x}), \quad Q_j\succeq 0, \quad j=1,\dots,m.
\end{split}
\end{equation}
Note that the constraint can also be expressed as $f(\mathbf x)-\lambda \in \mathcal{Q}(\mathbf{g})$, where 
\[
\mathcal{Q}(\mathbf{g}) = \{\sum_{j=0}^m \sigma_j(\mathbf{x})g_j(\mathbf{x}) : \sigma_j \text{ SOS}\}
\]
is the quadratic module generated by the constraints $\mathbf g$.
Assume the Archimedean condition holds, i.e. there exists $R$ such that $R^2-\|\mathbf{x}\|^2 \in \mathcal Q(\mathbf g)$.
Then, Putinar’s Positivstellensatz implies that the relaxations converge to the global minimum monotonically \cite{Lasserre2001}:

\begin{equation}
    p_k \ \uparrow\ p^*, \quad \lambda_k \ \uparrow\ p^*.
\end{equation}

We call this the \emph{dense moment-SOS hierarchy} because it couples all the variables together, creating PSD matrices of size $\binom{n+k}{k}$. If the problem has sparsity, $f$ and $g_j$ only couple specific subsets of variables or monomials, we can design more efficient SDP relaxations, the sparse hierarchies.

\subsection{Sparse hierarchies}

Given the problem \eqref{eq:pop}, build the \emph{correlative sparsity graph} $G(V,E)$, where $V=\{1,\dots,n\}$, such that each vertex corresponds to a variable, and $(i,i')\in E$ if $x_i$ and $x_{i'}$ appear in the same monomial in $f$ or in the same constraint $g_j$. Let us define some concepts of graph theory before describing the sparse hierarchies.

    A graph $G$ is a \emph {chordal graph} if all the cycles of four or more vertices have a \emph{chord}, an edge connecting two vertices of the cycle but which is not part of it.
    A  \emph{clique} is a subset of vertices of a graph $G$ such that all pairs of vertices in the clique are connected by an edge. 

Let $\widetilde{G}$ be a \emph{chordal extension} of $G$, built by adding chords (edges) until there are no remaining cycles of four (or more) vertices without a chord. Then there exist cliques $\mathcal{I}=\{I_1,\dots,I_N\}$ that satisfy the \emph{running intersection property} (RIP): for each pair of cliques that share a specific variable, there is a path of cliques connecting them, all of which contain the variable. Equivalently, if we pick all the cliques that contain a certain variable, this forms a connected graph. If the cliques are maximal (cliques that are not subsets of any other clique), then the clique decomposition is a \emph{tree}, an undirected graph that is connected and without cycle. This is what we call a \emph{clique tree}. Throughout this work, whenever referring to cliques $I_1,\ldots,I_N$ of a chordal graph, we assume that the cliques are maximal and hence form a clique tree.

For each edge $(a,b)$ of the clique tree, we define the separator $S_{ab}=I_a \cap I_b$. Assign the constraints $g_j$ to some clique $I_a$, containing all of its variables, and index them with $J_a\subset\{1,\dots, m\}$, which assigns inequalities to clique $a$. And finally, group the monomials of $f$ by cliques of the clique tree such that $f(\mathbf{x})=\sum_{a=1}^N f_a(x_{I_a})$, for $f_a(x_{I_a})\in \R[x_{I_a}]$. Then the correlative sparse moment relaxation  \cite{Lasserre2006} reads
\begin{equation}\label{eq:csmom-sdp}
\begin{split}
(P^{\mathrm{cs}}_k):\quad 
p^{\mathrm{cs}}_k \;=\; \inf_{\{y^{(a)}\}_{a=1}^N} \quad & \sum_{a=1}^N \,L_{\mathbf{y^{(a)}}}( f_a) \\
\text{s.t.}\quad 
& M^{I_a}_k\!\big(y^{(a)}\big)\ \succeq\ 0, \quad a=1,\dots,N,\\
& M^{I_a}_{k-d_j}\!\big(g_j\,y^{(a)}\big)\ \succeq\ 0,\quad a=1,\dots,N,\ \ j\in J_a,\\
& y^{(a)}\big|_{S_{ab}} = y^{(b)}\big|_{S_{ab}},\quad a,b=1,\dots,N,\\
& y^{(a)}_0=1,\quad a=1,\dots,N,\\
\end{split}
\end{equation}
where $y^{(a)}\in\R^{\varepsilon}$ for $\varepsilon=\binom{|I_a|+2k}{2k}$. Note that in the moment SDP we have to add overlap equalities, to ensure that  moments corresponding to shared variables coincide in different cliques. The dual SOS problem is
\begin{equation}\label{eq:cssos-sdp}
\begin{split}
(D^{\mathrm{cs}}_k):\quad 
\lambda^{\mathrm{cs}}_k=\ \sup_{\lambda,\ \{Q_{a,0}\},\ \{Q_{a,j}\}} &\quad  \lambda \\
\text{s.t.}\quad 
& \sum_{a=1}^N f_a(x_{I_a})\ -\ \lambda\ -\ \sum_{a=1}^N\ \sum_{j\in J_a}\sigma_{a,j}(x_{I_a})\,g_j(x_{I_a})\ =\ \sum_{a=1}^N \sigma_{a,0}(x_{I_a}),\\
& \sigma_{a,0}(x_{I_a})\ =\ \mathbf{z}_{k,I_a}(x)^{\!\top}\, Q_{a,0}\, \mathbf{z}_{k,I_a}(x), \quad a=1,\dots,N,\\
& \sigma_{a,j}(x_{I_a})\ =\ \mathbf{z}_{k-d_j,I_a}(x)^{\!\top}\, Q_{a,j}\, \mathbf{z}_{k-d_j,I_a}(x), \quad a=1,\dots,N,\,\ j\in J_a,\\
& Q_{a,0}\succeq 0,\quad Q_{a,j} \succeq 0, \quad a=1,\dots,N,\,\ j\in J_a,
\end{split}
\end{equation}
where we let $\mathbf{z}_{k,I_a}(x)$ be the vector of monomials in variables $x_{I_a}$ up to degree $k$. 

Assume a sparse Archimedean condition holds: there exists $R>0$ and polynomials
$\{q_a(x_{I_a})\}_{a=1}^N$ in the clique variables such that
$
R-\|\mathbf x\|_2^2=\sum_{a=1}^N q_a(x_{I_a})
$
with each $q_a$ belonging to the quadratic module generated by $\{g_j:\, j\in J_a\}$ on $x_{I_a}$. 
Together with the chordal construction (so that the cliques $\{I_a\}$ admit a clique tree and the overlap equalities in \eqref{eq:csmom-sdp} are enforced), the \emph{correlative sparse SOS (CSSOS)} hierarchy bounds the sparse hierarchy from below $p^{cs}_k\leq p_k$, and is monotone and convergent:
\begin{equation}
    p^{\mathrm{cs}}_k \ \uparrow\ p^*, \quad \lambda^{\mathrm{cs}}_k \ \uparrow\ p^*,
\end{equation}
as $k\to\infty$ \cite{Lasserre2006,Waki2006}. 

Besides correlative sparsity, we can also have a sparse polynomial due to the reduced number of monomials appearing, the so called term sparsity \cite{Wang2021}. In this case, we build a term sparsity graph, take the chordal extension and consider the cliques in a similar manner. Term sparsity yields a convergent hierarchy under analogous conditions, called \emph{term sparsity SOS (TSSOS)}, although they follow from a different type of graph construction.

In essence, CSSOS groups by variables, while TSSOS groups by monomials. When a polynomial has a small number of terms, TSSOS often yields smaller blocks; when variable interactions are localized but dense within cliques, CSSOS is preferable. The hybrid CS-TSSOS combines both \cite{Wang2022}. There are also other sparse variants like ideal sparsity \cite{Korda2024}, where the structure of the constraints is exploited.

\section{LRPOP hierarchy}\label{sec:lrpop}

Any polynomial can be written as $f(\mathbf{x}) = \sum_{l=1}^r \prod_{i=1}^n f_{l,i}(x_i)$ for some rank $r$, and number of variables $n$. Let us consider the POP \eqref{eq:pop}. We can define a new set of variables, $\mathbf{t}\in\R^{nr}$, defined recursively for $l=1,\dots, r$, as $$t_{l,1}=f_{l,1}(x_1) \text{ for } i=1, \quad t_{l,i}=t_{l,i-1}f_{l,i}(x_i) \text{ for } i=2,\dots n.$$
The equality constraints can be rewritten as $h_{l,i}(\mathbf{t},x_i)=0$ with $h_{l,1}(\mathbf{t},x_i)=t_{l,1}-f_{l,1}(x_1)$, and $h_{l,i}(\mathbf{t},x_i)=t_{l,i}-t_{l,i-1}f_{l,i}(x_i)$ for $i=2,\dots n$. Then $f(\mathbf{x})=\sum_{l=1}^r t_{l,n}$, so now we can set the POP as
\begin{equation}\label{eq:lrpop-pop}
\begin{split}
\inf_{\mathbf{x}\in\R^n,\mathbf{t}\in\R^{rn}} \quad &\sum_{l=1}^r t_{l,n}\\
 \text{s.t.} \quad 
 &h_{l,i}(\mathbf{t},x_i) =0, \quad l=1,\dots, r; \,\, i=1,\dots,n\\
 & g_j(x_j)\geq0, \quad j=1,\dots,n.
\end{split}
\end{equation}

With these variables, we can build a correlative sparsity graph that will have a symmetric structure which can be exploited to get a clique decomposition of small maximal clique size. Let us illustrate this with an example for $r=2, n=5$. 

\begin{example}
Consider the objective function
\[
\begin{aligned}
    f(\mathbf{x}) &= 12x_3+24x_1x_3-6x_2x_3+4x_3x_4+2x_2x_4-18x_3x_5 \\
    & -12x_1x_2x_3+8x_1x_3x_4-2x_1x_2x_4-36x_1x_3x_5+4x_2x_3x_4+9x_2x_3x_5-2x_2x_4x_5-6x_3x_4x_5 \\
    & -10x_1x_2x_3x_4+18x_1x_2x_3x_5+2x_1x_2x_4x_5-12x_1x_3x_4x_5\\
    &+12x_1x_2x_3x_4x_5\\
    & = (1+2x_1)(-2+x_2)(-x_3)(3+x_4)(2-3x_5) + (-1+x_1)(2x_2)(1+3x_3)(-x_4)(1-x_5)
\end{aligned}
\]
\end{example}

The initial correlative sparsity graph is depicted in Figure~\ref{fig:X_n}.

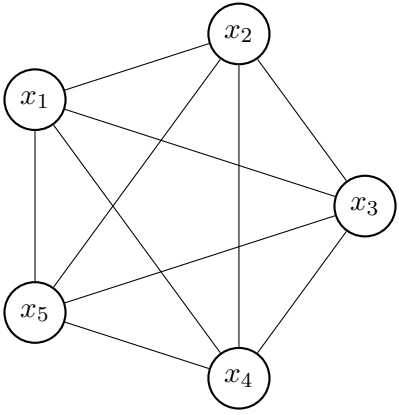
\begin{figure}[H]
\centering
\begin{tikzpicture}[
  varnode/.style={circle, draw, minimum size=8mm, thick},
  >=stealth
]
% --- layout params ---
\def\radius{2.4cm}
\def\start{144}   % place x1 at old x3 position (144° from +x axis)
\def\step{-72}    % go to the right (clockwise). Use +72 for counterclockwise.

% five nodes on the circle, starting at \start and stepping by \step
\foreach \k [evaluate=\k as \ang using {\start + (\k-1)*\step}] in {1,...,5}
  \node[varnode] (x\k) at (\ang:\radius) {$x_{\k}$};

% all 10 edges of K5
\foreach \i/\j in {1/2,1/3,1/4,1/5, 2/3,2/4,2/5, 3/4,3/5, 4/5}
  \draw (x\i) -- (x\j);
\end{tikzpicture}

\caption{Correlative sparsity graph of the original problem.}
\label{fig:X_n}
\end{figure}

This is the complete graph $K_5$, so we cannot decompose it efficiently into small cliques. Similarly, the monomial graph will be quite complicated and very dense in edges, as the degree of the largest monomial is $\prod_{i=1}^n d_{l,i}$. Therefore both the CSSOS and TSSOS hierarchies do not bring about a significant reduction in problem size. 

\begin{definition}
    Let $G_{r,n}=(V,E)$ be the correlative sparsity graph of problem \eqref{eq:lrpop-pop}, where the set of vertices is given by the set of variables, and two vertices are connected by an edge if they appear in the same constraint. The graph $G_{r,n}$ depends only on the rank $r$, and the number of variables, $n$, of $f(\mathbf{x})$.
\end{definition}

\begin{figure}[H]
    \centering
\begin{tikzpicture}[
  >=stealth,
  vtx/.style={circle, draw, thick, minimum size=8mm, inner sep=1pt, fill=white},
  solidedge/.style={thick},
  dashededge/.style={thick, dashed, line cap=round},
  dottededge/.style={thick, dotted, line cap=round}
]

% Nodes
\foreach \i/\x in {1/0, 2/2.4, 3/4.8, 4/7.2, 5/9.6} {
  \node[vtx] (t1\i) at (\x,  3.0) {$t_{1,\i}$};
  \node[vtx] (x\i)  at (\x,  0.0) {$x_{\i}$};
  \node[vtx] (t2\i) at (\x, -3.0) {$t_{2,\i}$};
}

% Existing solid edges
\foreach \i in {1,2,3,4,5}{
  \draw[solidedge] (t1\i)--(x\i)--(t2\i);
}
\draw[solidedge] (t11)--(t12)--(t13)--(t14)--(t15);
\draw[solidedge] (t21)--(t22)--(t23)--(t24)--(t25);

\foreach \i/\j in {1/2,2/3,3/4,4/5}{
  \draw[solidedge] (t1\i) -- (x\j);
  \draw[solidedge] (t2\i) -- (x\j);
}

% Chordal extension: fill edges
% Diagonal chords (keep straight)
\draw[dashededge] (t12) -- (t21); % (7,11)
\draw[dashededge] (t13) -- (t22); % (8,12)
\draw[dashededge] (t13) -- (t24); % (8,14)

% Vertical chords (curve so they’re visible)
\draw[dashededge] (t11) .. controls +(0.9,0) and +(0.9,0) .. (t21); % (6,11)
\draw[dashededge] (t12) .. controls +( 0.9,0) and +( 0.9,0) .. (t22); % (7,12)
\draw[dashededge] (t13) .. controls +(0.9,0) and +(0.9,0) .. (t23); % (8,13)
\draw[dashededge] (t14) .. controls +( 0.9,0) and +( 0.9,0) .. (t24); % (9,14)

\end{tikzpicture}
    \caption{Correlative sparsity graph, $G_{2,5}$, with chords corresponding to a chordal extension shown in dashed lines.}
    \label{fig:X_rn}
\end{figure}
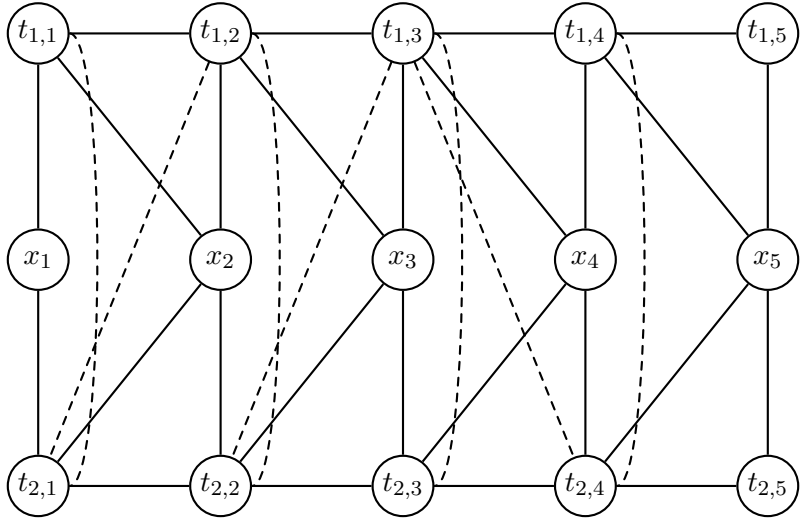

Now we take this new graph, $G_{r,n}$, we find a chordal extension as in Figure~\ref{fig:X_rn}, and then compute a clique decomposition to obtain the decomposition with the smallest maximal clique (depicted in Figure~\ref{fig:Tree}). For our example we have the largest clique of size 4, which is smaller than the maximal clique size with the other sparse methods. Note that this is a minimal example where one obtains a speedup. Far more significant speedup is obtained if the number of variables grows while the rank is kept fixed. This will be formalized in Theorem~\ref{TW}. 

\begin{figure}[H]
    \centering

\begin{tikzpicture}[
  scale=1.0, every node/.style={transform shape},
  bag/.style={
    draw, align=center,
    inner sep=3pt,
    minimum width=28mm,
    minimum height=9mm
  },
  >=Stealth
]
% Root
\node[bag] (C10) at (0,0) {$\{x_3,\, t_{12},\, t_{22},\, t_{23}\}$};

% Left branch: C10 -> C3 -> C2 -> C1
\node[bag] (C3)  [below left=8mm and 12mm of C10] {$\{x_2,\, t_{11},\, t_{12},\, t_{22}\}$};
\node[bag] (C2)  [below=9mm of C3]                 {$\{x_2,\, t_{11},\, t_{21},\, t_{22}\}$};
\node[bag] (C1)  [below=9mm of C2]                 {$\{x_1,\, t_{11},\, t_{21}\}$};

% Right branch: C10 -> C9 -> C8 -> C7 -> C6 with two leaves C5, C4
\node[bag] (C9)  [below right=8mm and 12mm of C10] {$\{x_3,\, t_{12},\, t_{13},\, t_{23}\}$};
\node[bag] (C8)  [below=9mm of C9]                  {$\{x_4,\, t_{13},\, t_{14},\, t_{23}\}$};
\node[bag] (C7)  [below=9mm of C8]                  {$\{x_4,\, t_{14},\, t_{23},\, t_{24}\}$};
\node[bag] (C6)  [below=9mm of C7]                  {$\{x_5,\, t_{14},\, t_{24}\}$};
\node[bag] (C5)  [below left=7mm and 0mm of C6]     {$\{x_5,\, t_{14},\, t_{15}\}$};
\node[bag] (C4)  [below right=7mm and 0mm of C6]    {$\{x_5,\, t_{24},\, t_{25}\}$};

% Edges
\draw (C10) -- (C3);
\draw (C3)  -- (C2);
\draw (C2)  -- (C1);

\draw (C10) -- (C9);
\draw (C9)  -- (C8);
\draw (C8)  -- (C7);
\draw (C7)  -- (C6);
\draw (C6)  -- (C5);
\draw (C6)  -- (C4);
\end{tikzpicture}

    \caption{Clique tree decomposition of $G_{2,5}$.}
    \label{fig:Tree}
\end{figure}
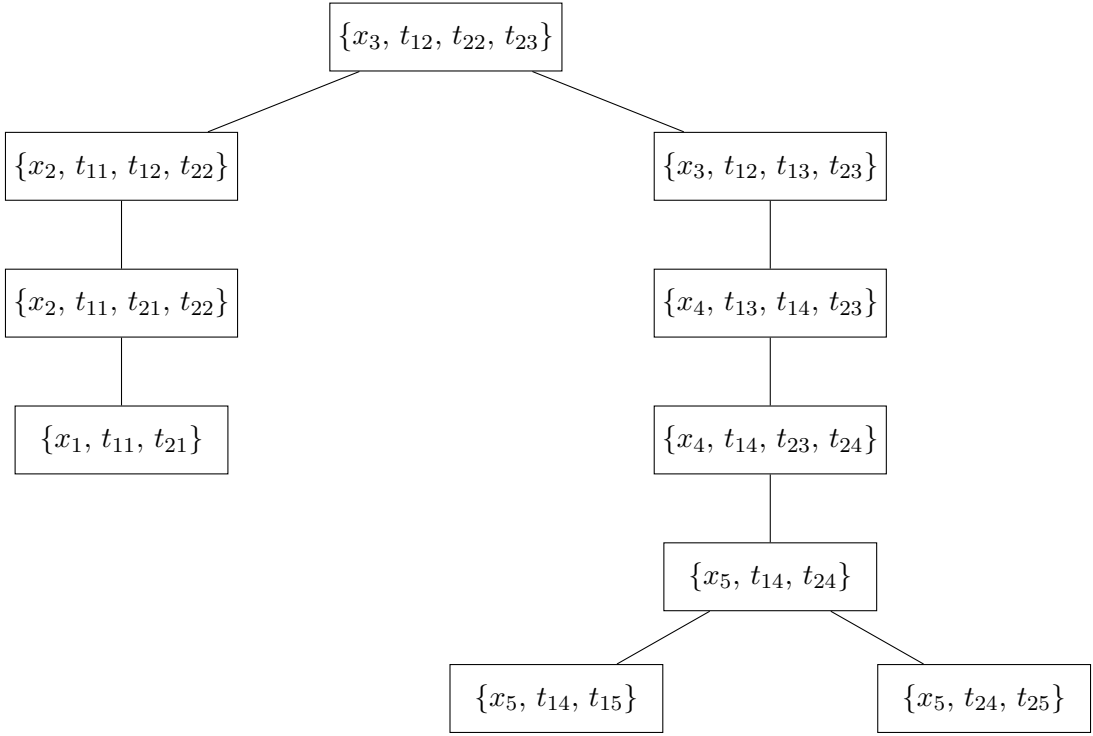

These cliques, named $I_1,\dots, I_N$, are used to form the moment and Gram matrices. The inequalities and equalities of the problem must now belong to a single clique. We define the inequalities at clique $I_a$ as $g(\mathbf{x},\mathbf{t};I_a)\geq 0$, and we let the index set $J_a\subset \{1,\ldots,n\}$ assign the inequalities for each clique. Similarly, we let the index set $H_a \subset\{(l,i) \mid l =1,\ldots,r, i = 1,\ldots,n\}$ assign the equalities for each clique. 

Let us define $q_a$ as the monomials in the variables $\{x_{I_a}\}$, such that we have, for a relaxation order $k$,  the relaxed moment SDP problem
\begin{equation}\label{eq:lrmom-sdp}
\begin{split}
(P^{\mathrm{LR}}_k):\quad 
p^{\mathrm{LR}}_k \;=\; \min_{\{y^{(a)}\}_{a=1}^N} \quad & \sum_{a=1}^N \,L_{\mathbf{y^{(a)}}}( f_a) \\
\text{s.t.}\quad 
& M^{I_a}_k\!\big(y^{(a)}\big)\ \succeq\ 0, \quad a=1,\dots,N,\\
& M^{I_a}_{k-d_j}\!\big(g_j\,y^{(a)}\big)\ \succeq\ 0,\quad a=1,\dots,N,\ \ j\in J_a,\\
& L_{\mathbf{y}^(a)}(q_ah_{l,i})=0, \quad (l,i)\in H_a,\, \deg(q_ah_{l,i})<2k,\\
& y^{(a)}\big|_{S_{ab}} = y^{(b)}\big|_{S_{ab}},\quad a,b=1,\dots,N,\\
& y^{(a)}_0=1,\quad a=1,\dots,N.\\
\end{split}
\end{equation}
The dual SOS SDP problem is
\begin{equation}\label{eq:lrpop-sdp}
\begin{split}
(D^{\mathrm{LR}}_k):\quad 
\lambda^{\mathrm{LR}}_k=\ &\sup_{\lambda,\ \{Q_{a,0}\},\ \{Q_{a,j}\},\ \{\tau_{a,l,i}\}} \quad  \lambda \\
\text{s.t.}\quad 
& \sum_{a=1}^N f_a(x_{I_a})\ -\ \lambda\ -\ \sum_{a=1}^N\ \sum_{j\in J_a}\sigma_{a,j}(x_{I_a})\,g_j(x_{I_a})\ \\
&-\sum_{a=1}^N\ \sum_{(l,i)\in H_a}\tau_{a,l,i}(x_{I_a})\,h_{l,i}(x_{I_a})\ =\ \sum_{a=1}^N \sigma_{a,0}(x_{I_a}),\\
& \sigma_{a,0}(x_{I_a})\ =\ \mathbf{z}_{k,I_a}(x)^{\!\top}\, Q_{a,0}\, \mathbf{z}_{k,I_a}(x), \quad \forall a\\
& \sigma_{a,j}(x_{I_a})\ =\ \mathbf{z}_{k-d_j,I_a}(x)^{\!\top}\, Q_{a,j}\, \mathbf{z}_{k-d_j,I_a}(x), \quad \forall a\,\ j\in J_a,\\
& Q_{a,0}\succeq 0,\quad Q_{a,j} \succeq 0, \quad \forall a\,\ j\in J_a,\\
& \tau_{a,l,i}(x_{I_a})\in \R[x_{I_a}], \,\, \deg(\tau_{a,l,i}h_{l,i})<2k, \quad \forall a,\forall l, \forall i, (l,i)\in H_a.
\end{split}
\end{equation}

We call this type of moment-SOS hierarchy LRPOP, for \emph{low-rank polynomial optimization}.

\subsection{Convergence}

To prove the convergence of this hierarchy, we follow the same steps as for the correlative-sparse hierarchy. First, we prove an auxiliary proposition related to the Archimedean property of the problem with the lifted variables.

\begin{proposition}
    If $K = \{\mathbf{x} \in \mathbb{R}^n \,\,| \,\,  g_j(\mathbf{x}) \geq 0 \ \}$ satisfies the Archimedean condition, then the lifted set $K' = \{\mathbf{x}\in \mathbb{R}^n, \mathbf{t} \in\R^{rn} \,\,| \,\,  g_j(\mathbf{x}) \geq 0, h_{l,i}(\mathbf{t},x_i)=0 \ \}$ also satisfies the Archimedean condition.
    \label{prop1}
\end{proposition}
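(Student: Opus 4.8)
The plan is to produce an explicit Archimedean certificate for the lifted problem, i.e. to show that a ball constraint $R'^2-\|\mathbf{x}\|^2-\|\mathbf{t}\|^2$ lies in the quadratic module $\mathcal{Q}'$ generated by the $g_j$ together with the equality constraints $h_{l,i}$. Here I treat each equality $h_{l,i}=0$ as the pair $h_{l,i}\ge 0$, $-h_{l,i}\ge 0$, so that $\mathcal{Q}'$ contains the full ideal $\langle h_{l,i}\rangle$ and each $h_{l,i}$ may be multiplied by an arbitrary polynomial (consistent with the free multipliers $\tau_{a,l,i}$ in \eqref{eq:lrpop-sdp}). Since $R^2-\|\mathbf{x}\|^2\in\mathcal{Q}(\mathbf{g})\subseteq\mathcal{Q}'$ by hypothesis, it suffices to dominate the $\mathbf{t}$-block: I would construct a constant $B$ with $B-\|\mathbf{t}\|^2 = B-\sum_{l,i}t_{l,i}^2\in\mathcal{Q}'$, and then take $R'^2:=R^2+B$.

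First I would record the standard consequence of the hypothesis: because $\mathcal{Q}(\mathbf{g})$ is Archimedean, it \emph{dominates} every polynomial, i.e. for each $q\in\R[\mathbf{x}]$ there is a constant $C$ with $C-q\in\mathcal{Q}(\mathbf{g})$. In particular, each squared univariate factor is dominated: there exist constants $D_{l,i}\ge 0$ with $D_{l,i}-f_{l,i}(x_i)^2\in\mathcal{Q}(\mathbf{g})$.

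The core is an induction on $i$ proving $B_{l,i}-t_{l,i}^2\in\mathcal{Q}'$ for suitable constants $B_{l,i}$. The base case $i=1$ follows from the identity $t_{l,1}^2=f_{l,1}(x_1)^2+h_{l,1}\bigl(t_{l,1}+f_{l,1}(x_1)\bigr)$ together with $C_{l,1}-f_{l,1}(x_1)^2\in\mathcal{Q}(\mathbf{g})$, giving $B_{l,1}=C_{l,1}$. For the step, the analogous identity $t_{l,i}^2=\bigl(t_{l,i-1}f_{l,i}(x_i)\bigr)^2+h_{l,i}\bigl(t_{l,i}+t_{l,i-1}f_{l,i}(x_i)\bigr)$ reduces the problem to dominating $t_{l,i-1}^2 f_{l,i}(x_i)^2$. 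Here I would invoke two closure properties of a quadratic module: multiplying $D_{l,i}-f_{l,i}(x_i)^2$ by the square $t_{l,i-1}^2$ (an SOS), and multiplying the inductive element $B_{l,i-1}-t_{l,i-1}^2$ by the nonnegative scalar $D_{l,i}$, then telescoping:
\[
D_{l,i}B_{l,i-1}-t_{l,i-1}^2 f_{l,i}(x_i)^2 = D_{l,i}\bigl(B_{l,i-1}-t_{l,i-1}^2\bigr) + t_{l,i-1}^2\bigl(D_{l,i}-f_{l,i}(x_i)^2\bigr)\in\mathcal{Q}'.
\]
Both summands lie in $\mathcal{Q}'$, so setting $B_{l,i}:=D_{l,i}B_{l,i-1}$ and subtracting the ideal term $h_{l,i}(t_{l,i}+t_{l,i-1}f_{l,i}(x_i))$ closes the induction. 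Summing $B_{l,i}-t_{l,i}^2$ over all $(l,i)$ and adding $R^2-\|\mathbf{x}\|^2$ produces the ball certificate, establishing that $\mathcal{Q}'$ is Archimedean.

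The main obstacle is exactly this inductive domination of the nonlinear lifted monomials $t_{l,i}^2$. One cannot merely invoke compactness of $K'$ (which is immediate, since each $t_{l,i}$ is a continuous image of the compact $K$), because the Archimedean property is an algebraic statement about the module rather than about the set; the work lies in controlling the products using only closure under SOS multiplication and the freedom to multiply each $h_{l,i}$ by an arbitrary polynomial. A minor point to handle with care is that the SOS multipliers live in $\R[\mathbf{x},\mathbf{t}]$ whereas the $g_j$ and the domination constants come from $\R[\mathbf{x}]$; this is harmless under the inclusion $\R[\mathbf{x}]\hookrightarrow\R[\mathbf{x},\mathbf{t}]$, which embeds $\mathcal{Q}(\mathbf{g})$ into $\mathcal{Q}'$.
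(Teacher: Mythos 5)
Your proof is correct, and while it shares the paper's overall skeleton (certify $B-\|\mathbf{t}\|^2$ in the lifted quadratic module plus ideal, then add it to the existing ball certificate $R^2-\|\mathbf{x}\|^2$), the mechanism you use for the key step is genuinely different. The paper takes an analytic route: it uses compactness of $K$ to define $B_{l,j}=\sup_K|\prod_{k\le j}f_{l,k}|+\varepsilon$, invokes Putinar's Positivstellensatz to place $B_{l,j}^2-\bigl(\prod_{k\le j}f_{l,k}\bigr)^2$ in $\mathcal{Q}(\mathbf{g})$, and then absorbs the difference of squares $\bigl(\prod f\bigr)^2-t_{l,j}^2$ into the ideal via $a^2-b^2=(a-b)(a+b)$. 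You instead argue purely algebraically: you use the elementary fact that an Archimedean module dominates every polynomial (the ring of bounded elements is all of $\R[\mathbf{x}]$) to bound each $f_{l,i}^2$, and then run an induction along the recursion $t_{l,i}=t_{l,i-1}f_{l,i}$, using only closure of the module under SOS multiplication and nonnegative scalars. Your approach buys two things: it avoids appealing to Putinar's theorem (a much deeper result than what is needed here), and it handles the recursive constraints $h_{l,i}=t_{l,i}-t_{l,i-1}f_{l,i}$ exactly as defined, whereas the paper silently replaces them by the unrolled identities $h_{l,j}=t_{l,j}-\prod_{k\le j}f_{l,k}$ — a substitution that is valid (the unrolled polynomial lies in the ideal by a telescoping argument) but is not justified in the text. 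The paper's route is shorter and makes the geometric intuition (each $t_{l,j}$ is bounded on the compact feasible set) more visible; yours is more self-contained and more faithful to the actual generators of the ideal.
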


\begin{proof}

Let $\R[\mathbf{x},\mathbf{t}]$ denote the ring of polynomials in the variables $\mathbf{x}$ and the lifted variables $\mathbf{t}$. We define the quadratic module generated by the inequality constraints $\mathbf{g}=\{g_1,\dots,g_m\}$ as
\[
\mathcal{Q}(\mathbf{g})=\left\{\sigma_0 + \sum_{j=1}\sigma_jg_j \,\, |\,\, \sigma_0,\dots,\sigma_j\in\Sigma[\mathbf{x,t}] \right\},
\]
where $\Sigma[\mathbf{x},\mathbf{t}]$ is the set of sum-of-squares polynomials. Similarly, we define the ideal generated by the equality constraints $\mathbf{h}=\{h_{l,i}\, |\, l=1,\dots,r;\,i=1,\dots,n\}$ as
\[
\mathcal{I}(\mathbf{h})=\left\{ \sum_{l,i} \phi_{l,i}h_{l,i}\, |\, \phi_{l,i} \in \R[\mathbf{x},\mathbf{t}] \right\}.
\]

Take the polynomial $f$ and assume $K = \{\mathbf{x} \in \mathbb{R}^n \,\,| \,\,  g_j(\mathbf{x}) \geq 0 \ \}$ satisfies the Archimedean condition, i.e. the quadratic module $\mathcal{Q}(\mathbf{g})$ generated by the $g_j$ contains the polynomial $R^2-\|\mathbf x\|^2$ for $R>0$ large enough.

Assume that $K$ satisfies the Archimedean condition. This implies that for $R>0$ large enough, the quadratic module generated by $\mathbf{g}$, restricted to $\mathbb{R}[\mathbf{x}]$, contains the polynomial $R^2 - \|\mathbf{x}\|^2$.
    
For the lifting variables $t_{l,j}$, let us define the bound $B_{l,j} := \sup_{x \in K} |\prod_{k}^j f_{l,k}(x_k)| + \varepsilon$ for some $\varepsilon > 0$. Since $K$ is compact (implied by the Archimedean property), this bound is finite. By Putinar's Positivstellensatz, we have:
\[
    B_{l,j}^2 - \left(\prod^j_k f_{l,k}(x_k)\right)^2 \in \mathcal{Q}(\mathbf{g}).
\]
Note that the equality constraints are $h_{l,j} = t_{l,j} - \prod^j_k f_{l,k}(x_k) = 0$. We can write:
\[
    B_{l,j}^2 - t_{l,j}^2 = \underbrace{B_{l,j}^2 - \left(\prod^j_k f_{l,k}(x_k)\right)^2}_{\in \mathcal{Q}(\mathbf{g})} + \underbrace{\left( \left(\prod^j_k f_{l,k}(x_k)\right)^2 - t_{l,j}^2 \right)}_{\in \mathcal{I}(\mathbf{h})}.
\]
The second term belongs to the ideal $\mathcal{I}(\mathbf{h})$ because $a^2 - b^2 = (a-b)(a+b)$, and $(a-b)$ is exactly our equality constraint $h_{l,j}$.
    
Summing over all indices $(l,j)$, and adding the bound for $\|\mathbf{x}\|^2$, we obtain:
\[
    R' - \|\mathbf{x}\|^2 - \|\mathbf{t}\|^2 \, \in \mathcal{Q}(\mathbf{g}) + \mathcal{I}(\mathbf{h})
\]
for some sufficiently large $R'$. Thus, the quadratic module associated with the lifted set $K'$ is Archimedean.
\end{proof}

\begin{corollary}
    Let $\mathcal{Q}_a$ denote the quadratic module generated by the inequality constraints $\{g_j\}_{j \in J_a}$ and the ideal of equality constraints $\{h_{l,i}\}_{(l,i) \in H_a}$ restricted to the variables of clique $I_a$. If the set $K$ satisfies the Archimedean condition, then for every clique $a=1,\dots,N$, the associated quadratic module $\mathcal{Q}_a$ also satisfies the Archimedean condition.
\end{corollary}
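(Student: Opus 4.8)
The plan is to establish the Archimedean property clique by clique by producing, for each $a$, a constant $C_a>0$ with $C_a-\sum_{v\in I_a}v^2\in\mathcal{Q}_a$, where the sum runs over all variables (original $x_i$ and lifted $t_{l,i}$) belonging to the clique $I_a$. The argument should mirror Proposition~\ref{prop1}, with the crucial difference that every certificate must now be built using only the constraints $\{g_j\}_{j\in J_a}$ and $\{h_{l,i}\}_{(l,i)\in H_a}$ attached to the single clique $I_a$. I would split the work into bounding the original variables $x_i\in I_a$ and bounding the lifted variables $t_{l,i}\in I_a$ separately, then sum the two families of certificates.

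For the original variables, each inequality $g_j$ is univariate in $x_j$; since the Archimedean hypothesis makes $K$ compact, the corresponding one-dimensional feasible set is a bounded interval, so $\rho_j-x_j^2\in\mathcal{Q}(\{g_j\})\subseteq\mathcal{Q}_a$ for a suitable $\rho_j>0$ (adjoining an explicit box constraint $x_j\in[\ell_j,u_j]$ if one is not already present). For the lifted variables I would argue by induction along the recursion $t_{l,i}=t_{l,i-1}f_{l,i}(x_i)$. The base case $t_{l,1}=f_{l,1}(x_1)$ is bounded because $x_1$ is. For the inductive step, the clique owning $h_{l,i}$ contains $t_{l,i},t_{l,i-1}$ and $x_i$, so modulo the ideal one may substitute $t_{l,i}^2\equiv t_{l,i-1}^2 f_{l,i}(x_i)^2$; concretely, with $F_i$ a bound for $f_{l,i}(x_i)^2$ coming from the $x_i$-bound and $B_{l,i}^2:=B_{l,i-1}^2 F_i$, the identity
\[
B_{l,i}^2-t_{l,i}^2
= B_{l,i-1}^2\big(F_i-f_{l,i}(x_i)^2\big)
+ f_{l,i}(x_i)^2\big(B_{l,i-1}^2-t_{l,i-1}^2\big)
- h_{l,i}\big(t_{l,i}+t_{l,i-1}f_{l,i}(x_i)\big)
\]
exhibits $B_{l,i}^2-t_{l,i}^2$ as a nonnegative constant times a module element, plus a square $f_{l,i}^2$ times a module element, plus an ideal element. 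Using that a quadratic module is closed under multiplication by squares and contains the generating ideal, all three terms lie in $\mathcal{Q}_a$.

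The hard part will be keeping this induction inside a single clique: the step for $t_{l,i}$ consumes a bound on the predecessor $t_{l,i-1}$, yet in a small clique the constraint $h_{l,i-1}$ that would bound $t_{l,i-1}$ need not be available, since its variables $x_{i-1},t_{l,i-2}$ typically lie in a neighbouring clique, so the recursion does not close intrinsically. This is exactly where Proposition~\ref{prop1} does the essential work: it guarantees that on the compact lifted set $K'$ every $t_{l,i}$ is globally bounded, so each constant $B_{l,i}$ above exists and the clique-local inequality $B_{l,i}^2-t_{l,i}^2\ge 0$ is valid and expressed in the clique variables alone. I would therefore phrase the final argument so that Proposition~\ref{prop1} supplies the constants, the recursion above illustrates (and, whenever a clique happens to carry the full predecessor chain, intrinsically certifies) the bounds, and in general these valid bounds are adjoined to the clique data $g(\mathbf{x},\mathbf{t};I_a)\ge 0$ — which the formulation already permits — making each $\mathcal{Q}_a$ Archimedean by definition while remaining redundant on $K'$, hence leaving the relaxation and its optimum unchanged.
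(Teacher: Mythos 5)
Your route is genuinely different from the paper's. The paper's proof takes the global certificate $R-\|\mathbf{x}\|^2-\|\mathbf{t}\|^2\in\mathcal{Q}(\mathbf{g})+\mathcal{I}(\mathbf{h})$ from Proposition~\ref{prop1}, adds the SOS polynomial $\|\mathbf{x}_{I_a^c}\|^2$ to strip away the complementary variables, and stops there; it never addresses the fact that the resulting membership still lives in the \emph{global} module (all constraints, multipliers in all variables) rather than in the clique-local module $\mathcal{Q}_a$. You instead try to build the certificate $C_a-\sum_{v\in I_a}v^2$ intrinsically inside each clique: univariate bounds $\rho_j-x_j^2$ for the $x$-variables, and a telescoping identity for the $t$-variables which is algebraically correct (expanding $h_{l,i}\,(t_{l,i}+t_{l,i-1}f_{l,i})=t_{l,i}^2-t_{l,i-1}^2f_{l,i}^2$ confirms it). You also put your finger on exactly the point the paper's argument skips: the inductive step consumes a bound on the predecessor $t_{l,i-1}$ whose own certificate lives in a neighbouring clique, so the recursion does not close within $I_a$.

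That said, your resolution --- adjoining the valid but redundant inequalities $B_{l,i}^2-t_{l,i}^2\ge 0$ (and, where needed, box constraints on $x_j$) to the clique data --- establishes Archimedeanity of an \emph{enlarged} quadratic module, not of $\mathcal{Q}_a$ as defined in the statement, which is generated by exactly $\{g_j\}_{j\in J_a}$ and $\{h_{l,i}\}_{(l,i)\in H_a}$. This is the standard fix in the correlative-sparsity literature and it is all the convergence theorem really needs, but as a proof of the corollary as literally stated it proves a slightly different claim; note also that your box-constraint caveat is not optional, since even for a univariate $g_j$ with $\{x_j: g_j(x_j)\ge 0\}$ compact the membership $\rho_j-x_j^2\in\mathcal{Q}(\{g_j\})$ can fail (e.g.\ $g=(1-x^2)^3$). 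In fairness, the paper's own two-line proof has the same localization gap, only less visibly; your version at least names the obstruction and patches it explicitly.
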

\begin{proof}
    Since the lifted set $K'$ is Archimedean by Proposition \ref{prop1}, take the variables associated with clique $I_a$ to be $x_{I_a}$, then there exists an $R>0$ such that
    \[
    R - \| \mathbf{x} \|^2 = R - \bigg(\| \mathbf{x}_{I_a} \|^2 + \| \mathbf{x}_{I_a^c} \|^2 \bigg) \, \in \mathcal{Q}(\mathbf{g})+\mathcal{I}(\mathbf{h}).
    \]

    Now, as $\| \mathbf{x}_{I_a^c} \|^2$ is a sum of squares polynomial, we can add it to this expression to get
    \[
    R - \bigg(\| \mathbf{x}_{I_a} \|^2 + \| \mathbf{x}_{I_a^c} \|^2 \bigg) +\| \mathbf{x}_{I_a^c} \|^2 = R-\| \mathbf{x}_{I_a} \|^2 \, \in \mathcal{Q}(\mathbf{g})+\mathcal{I}(\mathbf{h}).
    \]
    
    Therefore, the quadratic module generated by the constraints restricted to the variables in $I_a$ is Archimedean.
\end{proof}

\begin{theorem}
    Assume a sparse Archimedean condition holds: there exists $R$ and polynomials
$\{q_a(x_{I_a})\}_{a=1}^N$ in the clique variables such that
$
R^2-\|\mathbf x\|_2^2 = \sum_{a=1}^N q_a(x_{I_a})
$
with each $q_a$ belonging to the quadratic module generated by $\{g_j:\, j\in J_a\}$ on $x_{I_a}$. 
Together with the chordal construction (so that the cliques $\{I_a\}$ admit a clique tree and the overlap equalities in \eqref{eq:lrmom-sdp} are enforced), the low-rank hierarchy is monotone and convergent:
\begin{equation}
    p^{\mathrm{LR}}_k \ \uparrow\ p^*, \quad \lambda^{\mathrm{LR}}_k \ \uparrow\ p^*,
\end{equation}
as $k\to\infty$.
\end{theorem}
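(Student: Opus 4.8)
The plan is to recognize that the LRPOP hierarchy \eqref{eq:lrmom-sdp}--\eqref{eq:lrpop-sdp} is exactly the correlative-sparse moment-SOS hierarchy \eqref{eq:csmom-sdp}--\eqref{eq:cssos-sdp} applied to the lifted problem \eqref{eq:lrpop-pop}, with the clique tree read off from the chordal extension of $G_{r,n}$ and the equality constraints $h_{l,i}$ handled by the ideal multipliers $\tau_{a,l,i}$. The statement then reduces to invoking the known convergence of CSSOS (with equalities) once I check its structural hypotheses for the lifted problem: (i) the cliques satisfy the running intersection property; (ii) the quadratic module of every clique is Archimedean; and (iii) the objective and every constraint are supported on the variables of a single clique.

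First I would note that the lifting is exact, so the optimal value of \eqref{eq:lrpop-pop} equals $p^*$. Indeed, the equalities $t_{l,1}=f_{l,1}(x_1)$ and $t_{l,i}=t_{l,i-1}f_{l,i}(x_i)$ force $t_{l,n}=\prod_{i=1}^n f_{l,i}(x_i)$ on the feasible set, so $\sum_{l=1}^r t_{l,n}=f(\mathbf x)$; moreover the $\mathbf x$-components of feasible points of \eqref{eq:lrpop-pop} are precisely the points of $K$, with matching objective values. Next I would verify (i)--(iii). Hypothesis (i) holds by construction: the maximal cliques $I_1,\dots,I_N$ of the chordal extension of $G_{r,n}$ form a clique tree, which is equivalent to the RIP. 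Hypothesis (ii) is exactly the Corollary, which, via Proposition \ref{prop1}, transfers the Archimedean property of $K$ to each clique module $\mathcal Q_a$ (including the lifted variables $\mathbf t$). Hypothesis (iii) is automatic from the definition of $G_{r,n}$: two variables are adjacent whenever they occur in a common constraint, so the variables of any single $g_j$ or $h_{l,i}$ form a clique and hence lie in some maximal clique $I_a$; the objective $\sum_{l=1}^r t_{l,n}$ is a sum of single-variable terms, each supported on a clique.

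With the hypotheses in place, convergence follows from the sparse Putinar Positivstellensatz for correlatively sparse problems with equality constraints \cite{Lasserre2006, Grimm2007, Waki2006}. For any $\varepsilon>0$, the polynomial $f-(p^*-\varepsilon)$ is strictly positive on the lifted feasible set, so the sparse representation theorem produces SOS multipliers $\sigma_{a,0},\sigma_{a,j}$ and polynomial multipliers $\tau_{a,l,i}$, each in the variables of a single clique, realizing the equality constraint of \eqref{eq:lrpop-sdp}. For $k$ large enough the degree bounds $\deg(\tau_{a,l,i}h_{l,i})<2k$ are met, so $\lambda=p^*-\varepsilon$ is feasible in $(D^{\mathrm{LR}}_k)$ and $\lambda^{\mathrm{LR}}_k\ge p^*-\varepsilon$. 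Together with the chain $\lambda^{\mathrm{LR}}_k\le p^{\mathrm{LR}}_k\le p^*$ (weak duality, and the fact that \eqref{eq:lrmom-sdp} relaxes the lifted problem whose value is $p^*$) and the routine monotonicity in $k$, this yields $\lambda^{\mathrm{LR}}_k\uparrow p^*$ and $p^{\mathrm{LR}}_k\uparrow p^*$.

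The hard part will be hypothesis (iii) at the algebraic rather than combinatorial level: while the clique construction makes each $h_{l,i}$ clique-supported, one must ensure that the Positivstellensatz continues to hold with the equality ideal present and that the representation splits the multipliers $\tau_{a,l,i}$ clique-wise. The inequality-only sparse theorem is classical; its extension to equalities relies precisely on the RIP to localize the ideal terms, so the crux is the interplay between the clique tree and the sparse representation. Everything else is bookkeeping of degrees and reduction to the already-established per-clique Archimedean property of the Corollary.
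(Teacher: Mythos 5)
Your proposal is correct and follows essentially the same route as the paper's (much terser) proof: both reduce the statement to the correlative-sparse convergence results of \cite{Lasserre2006,Waki2006} applied to the lifted problem, with the per-clique Archimedean hypothesis supplied by Proposition~\ref{prop1} and its Corollary. Your version usefully spells out the exactness of the lifting, the RIP, the clique-support of each constraint, and the degree bookkeeping that the paper leaves implicit, but there is no difference in strategy.
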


\begin{proof}
    Follows directly from the proposition that ensures the Archimedean condition holds for the lifted problem and the corollary that ensures the Archimedean condition holds for each clique, and the proofs in \cite{Lasserre2006,Waki2006}.
\end{proof}

\subsection{Maximal clique size}\label{Theorem}
The graph $G_{r,n}$ is fully determined by the rank and number of variables of $f(\mathbf{x})$. We are interested in the size of the largest positive semidefinite (PSD) block. For this let us introduce the notion of  tree decomposition, and treewidth, see; e.g. \cite{Diestel2025}.

A \emph{tree decomposition} of a graph $G=(V,E)$ is a tree $T$ where each node (vertex of the tree) is associated with a subset of vertices, a bag. Each bag of vertices $X_1,\dots,X_L$ satisfies the following properties
    \begin{itemize}
        \item Each vertex of the graph is contained in at least one bag and  $\bigcup_i X_i = V$.
        \item The bags containing vertex $v$ form a connected subtree of $T$.
        \item For every edge in the graph $(v,w)$, there is at least one bag containing both $v$ and $w$.
    \end{itemize}
The \emph{width} of a tree decomposition is the size of its largest bag minus one.
The \emph{treewidth} $\mathrm{tw}(G)$ of a graph $G$ is the minimum width among all possible tree decompositions of $G$.

As described for the sparse hierarchies, for each graph $G_{r,n}$ we can find a chordal graph containing it, that gives a clique tree decomposition. We are interested in a chordal extension that allows for a clique tree decomposition with its largest clique being  as small as possible. The size of the largest clique in this optimal clique tree decomposition is given by the treewidth as
\[\text{max clique size of } G_{r,n}=\mathrm{tw}(G_{r,n})+1.
\]
Let us now show that, for low-rank polynomials (with rank less than $n$), the treewidth of the $G_{r,n}$ graphs is fully determined by the rank $r$, and not the number of variables $n$, in sharp contrast with the dense and sparse hierarchies.

\begin{theorem}
    The variable graph $G_{r,n}$ has treewidth
    \[
    \mathrm{tw}(G_{r,n})=\min(n,r+1).
    \]
    In particular, when $r+1\leq n$, the maximal clique size in a chordal extension is at most $r+2$.
    \label{TW}
\end{theorem}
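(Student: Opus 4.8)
The first step is to read off the combinatorial structure of $G_{r,n}$ directly from the constraints in \eqref{eq:lrpop-pop}. The equality $h_{l,i}$ couples only $t_{l,i}$, $t_{l,i-1}$ and $x_i$, each $g_j$ involves only $x_j$, and the objective involves only the $t_{l,n}$; hence the edges are, within each layer $l$, the triangles $\{t_{l,i-1}, t_{l,i}, x_i\}$ (together with the edge $x_1 t_{l,1}$), all capped by the shared apex $x_i$. Thus $G_{r,n}$ consists of $r$ vertex-disjoint ``triangle strips'' (the layer paths $t_{l,1}-\cdots-t_{l,n}$ with triangular caps) glued along the common independent set $\{x_1, \ldots, x_n\}$; in particular $N(x_i) = \bigcup_{l} \{t_{l,i-1}, t_{l,i}\}$, and $t$-variables from different layers are never adjacent. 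A first observation to record is that the clique number is only $3$, so the elementary bound $\mathrm{tw} \ge \omega - 1$ is far from the claimed value and a genuine argument is needed on both sides.

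For the upper bound I would exhibit two tree decompositions and keep the smaller width. The first is a path decomposition sweeping the positions $i = 1, \ldots, n$, carrying as a frontier the $r$ variables $t_{\cdot,i}$ of the current column plus the apex $x_i$, and passing from column $i-1$ to column $i$ \emph{one layer at a time}, so that the bags grow to size at most $r+2$; this gives width $r+1$ for every $n$ (it is the decomposition illustrated in Figure~\ref{fig:Tree}). The second is an elimination ordering that removes the $t$'s layer by layer, within each layer from both ends towards the middle, and the apexes last. Since layers interact only through apexes, processing a layer creates fill edges only to apexes; each $t_{l,i}$ is eliminated after both of its path-neighbours, so its surviving neighbourhood consists solely of apexes and has size at most $n$. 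This ordering therefore has width at most $n$, and combining the two bounds yields $\mathrm{tw}(G_{r,n}) \le \min(n, r+1)$.

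The matching lower bound is the heart of the proof, and I would derive it from clique minors, using $\mathrm{tw}(G) \ge \mathrm{tw}(H)$ for every minor $H$ together with $\mathrm{tw}(K_m) = m-1$. Contracting each layer path $t_{l,1}-\cdots-t_{l,n}$ to a single vertex makes every apex adjacent to all $r$ contracted layers, exhibiting $K_{r,n}$ as a minor and giving $\mathrm{tw}(G_{r,n}) \ge \mathrm{tw}(K_{r,n}) = \min(r,n)$. This already settles the regime $r \ge n$, where $\min(n,r+1) = n$. The \emph{main obstacle} is the complementary regime $r+1 \le n$: there the complete-bipartite minor yields only $\mathrm{tw} \ge r$, one short of the target $r+1$, and this is precisely where the ``$+1$'' in the formula must be earned.

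To gain the missing unit I would build a $K_{r+2}$ minor directly, exploiting that $n \ge r+1$ leaves spare apexes. Take the ``bundle'' branch set $\{x_1\} \cup \{t_{l,1} : l = 1, \ldots, r\}$, which is connected through $x_1$ and meets every layer; the $r$ ``body'' sets $\{t_{l,2}, \ldots, t_{l,n}\}$, each adjacent to the bundle via the edge $t_{l,1} t_{l,2}$; and the singleton $\{x_2\}$, adjacent to all of the above through the edges $x_2 t_{l,1}$ and $x_2 t_{l,2}$. The only adjacencies still missing are those between distinct bodies (different layers never touch), which I repair by inserting a distinct connector apex $x_{c_l}$ into all but one body: a single apex placed inside a body makes it adjacent to every other layer simultaneously, so $r-1$ connectors suffice, and together with $x_1$ and $x_2$ they use only $r+1 \le n$ distinct apexes. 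Checking disjointness, connectivity and the $\binom{r+2}{2}$ adjacencies is then routine, and it produces $r+2$ pairwise-adjacent connected branch sets, i.e. a $K_{r+2}$ minor, whence $\mathrm{tw}(G_{r,n}) \ge r+1$. The two regimes combine to give $\mathrm{tw}(G_{r,n}) = \min(n, r+1)$; since the largest clique in an optimal chordal extension has size $\mathrm{tw}+1$, the maximal clique size is $\min(n,r+1)+1$, and in particular at most $r+2$ when $r+1 \le n$.
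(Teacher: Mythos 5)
Your proof is correct, and while the upper bound is essentially the paper's argument in different clothing (your sweeping path decomposition with bags of size $r+2$ is exactly the clique tree of Figure~\ref{fig:Tree}, and your layer-by-layer elimination mirrors the paper's row-by-row elimination for the $n\leq r+1$ regime), your lower bound in the critical case $r+1\leq n$ takes a genuinely different route. The paper earns the extra unit beyond the $K_{n,r}$ bipartite minor by a structural argument on chordal extensions: the column $S_i=\{t_{1,i},\dots,t_{r,i}\}$ is a minimal $x_i$--$x_{i+1}$ separator, hence induces a clique in every chordal extension, and a perfect-elimination-ordering argument then forces a clique of size $r+2$ containing $S_{i-1}\cup\{x_i\}\cup\{t_{l,i}\}$. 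You instead construct an explicit $K_{r+2}$ minor: the bundle $\{x_1\}\cup\{t_{l,1}\}_l$, the $r$ layer bodies with $r-1$ distinct connector apexes inserted to create the missing body-to-body adjacencies, and the singleton $\{x_2\}$; the count $r-1+2=r+1\leq n$ of apexes consumed is precisely where the hypothesis enters, and all $\binom{r+2}{2}$ adjacencies check out. Your route is more self-contained (it needs only minor-monotonicity of treewidth and $\mathrm{tw}(K_m)=m-1$, with no appeal to the minimal-separator/fill-in machinery), and it cleanly covers the boundary case $n=r+1$, where the paper's bipartite minor alone would give only $\min(n,r)=n-1$ and one must fall back on the other branch; the paper's separator argument, on the other hand, has the virtue of directly exhibiting which clique of size $r+2$ must appear in any chordal extension, which connects more immediately to the PSD block sizes actually used by LRPOP.
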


\begin{proof}
    For the upper bound we consider vertex elimination orders, which give us clique decompositions. If we find decompositions with maximal cliques of the size we want we will have an upper bound, as we cannot be sure it is the minimal decomposition. 

    A graph is chordal if and only if there exists a Perfect Elimination Ordering (PEO). A PEO is an ordering of vertices $v_1,v_2,\dots,v_m$ of the graph such that eliminating vertices in this order always leaves the remaining neighbors of the eliminated vertex forming a clique. Since a graph is chordal if and only if it has a PEO, we can use the definition of PEO to construct chordal graphs by considering a vertex eliminating ordering and adding edges at each step. This will ensure we have a PEO, therefore making the graph chordal.
    
   First let $r+1\leq n$, we show $\mathrm{tw}(G_{r,n})\leq r+1$. We want to build a PEO, so we consider the elimination ordering starting from the vertex $t_{1,n}$, we take all its neighbors and fully connect them. This forms a clique. Now we eliminate $t_{1,n}$ from the graph and keep the new edges. We then go to the second vertex of the elimination ordering, $t_{2,n}$, and do the same. We repeat this for each $t_{l,n}$ with $l=3,\dots,r$, and then we do it for $x_n$. When we reach $x_n$ we notice that it is connected to all $t_{l,n-1}$ for $l=1,\dots r$, so to $r$ many nodes. This creates a clique of size $r+1$. After connecting all of them we eliminate $x_n$ and move to $t_{1,n-1}$, and here we now have $r+1$ neighbors, making a clique of size $r+2$. We illustrate this for the first steps of the graph $G_{2,5}$ in Appendix~\ref{PEO}.
   
   The graph $G_{r,n}$ presents translational invariance across the columns so, by considering an adjacent pair of columns $i,i+1$, we know how the rest of the graph will work. The maximal clique for this vertex elimination ordering will be the one of $t_{1,n-1}$ with $r+2$ variables.

    Now let $n\leq r+1$, we show $\mathrm{tw}(G_{r,n})\leq n$. For this we now do the elimination row by row instead of column by column. Start from the vertex $t_{1,n}$ and connect all its neighbors between them, then eliminate the vertex. Keep going along the row $t_{1,i}$, and once we reach $t_{1,2}$ we note the vertex is connected to all $x_i$ for $i=2,\dots n$, so $n-1$ many of them, and to $t_{1,1}$. This makes a clique of size $n+1$. Following this elimination order and leaving the $x_i$ vertices for last we get a maximal clique of size $n+1$.

    With this, we finalized the proof of the upper bound. Let us move to the lower bound. First, let $n\leq r+1$,, we use the fact that minors of a graph cannot have larger treewidth than the full graph \cite{BODLAENDER2010259}. We can build a graph minor by contracting each of the $r$ rows into a single vertex. This creates a central set of vertices $x_i$ that are fully connected to each row vertex independently, but disconnected with each other. This has the form of a complete bipartite graph $K_{n,r}$, as we illustrate in Appendix~\ref{Bipartite}. The tree-width of a complete bipartite graph is $K_{n,r}=\min(n,r)=n$, therefore we have $\mathrm{tw}(G_{r,n})\geq n$. 

    Now let $r+1\leq n$, and consider any chordal extension, $G'$, of $G_{r,n}$. We show that $G'$ must contain a clique of size $r+2$ or bigger. Take the set of variables $S_i = \{t_{1,i},\dots, t_{r,i}\}$, then if we remove $S_i$ from $G_{r,n}$ we separate $x_i$ from $x_{i+1}$. Every minimal vertex separator has to induce a clique in any chordal extension \cite{berry:lirmm-00485851}, so $S_i$ will induce a clique in $G'$. Now consider the adjacent 'columns' $S_{i-1}$ and $S_i$, and the variable $x_i$, which is connected to all the elements in $S_{i-1}$ and $S_i$. We want to find the cliques follwowing from $G'$ so let us take a prefect elimination ordering of $G'$, and let $v$ be the first vertex of $S_{i-1}\cup S_i$ to appear in the ordering. Without loss of generality assume $v\in S_{i-1}$ (the same holds for $v\in S_i$). 
    
    Let $v=t_{l,i-1}$, then when $v$ is eliminated it will form a clique with its neighbors in $G'$ that appear later in the ordering. Since $S_{i-1}$ is a clique in $G'$ and $v$ is the first element in $S_{i-1}$ to be eliminated, it is connected to the remaining $r-1$ elements in the set. In $G_{r,n}$, $v$ is connected to $x_i$, so it will also be a neighbor in $S_i\cup S_{i-1}\cup \{x_i\}$. In the original graph, there is the edge $(t_{l,i-1},t_{l,i})$, so $t_{l,i}\in S_i$ is also a neighbor of $v$ still present in the graph. Therefore, the elimination of vertex $v$ requires a clique containing at least $S_{i-1}\cup \{x_i\}\cup \{t_{l,i}\}$, of size $r+2$. Since any chordal extension will have a clique of at least size $r+2$, the treewidth has to be bounded as $\mathrm{tw}(G_{r,n})\geq r+1$. We illustrate this in Appendix ~\ref{separator}.

    Combining the four segments of the proof, for the lower and upper bounds, we conclude $\mathrm{tw}(G_{r,n}) = \min(n,r+1)$.

\end{proof}

From this theorem we conclude that the graphs arising from polynomials of low rank (lower than the number of variables) give rise to a clique tree decomposition with maximal clique size $r+2$. This implies that the size of the PSD matrices is independent of the number of variables.

Note that the vertex elimination order we have used directly gives us a clique decomposition, so we can build the cliques from any of such graphs by following the same steps.

\subsection{Complexity}
We estimate the complexity of LRPOP by (i) counting the number and sizes of the PSD blocks, and (ii) counting linear equalities (overlap/separator constraints and lifting equalities). The LRPOP constraints are imposed clique-wise over a chordal extension of the low-rank graph $G_{r,n}$. Throughout this section we will assume $r+1\leq n$, such that $\mathrm{tw}(G_{r,n}) = r+1$. In particular, the maximal clique size is determined by the rank, not by $n$:
$\max_a |I_a|= \mathrm{tw}(G_{r,n})+1 = r+2$, and separators have size $|S_{ab}|\leq r+1$. This is the key structural reason why LRPOP scales well in $n$.

\medskip
\noindent\textbf{PSD blocks.}
Let $\{I_a\}_{a=1}^N$ be the cliques of a chordal extension of $G_{r,n}$. Each clique produces a Gram/moment block of dimension
\begin{equation}
s_a=\binom{|I_a|+k}{k}\ \leq\ \binom{r+2+k}{k},
\end{equation}
for relaxation order $k$. The total number of nodes of $G_{r,n}$ is $n(r+1)$, so the number of cliques must be lower or equal to it, therefore we have $N=\mathcal{O}(nr)$ many PSD blocks for the given sizes.

\medskip
\noindent\textbf{Overlap (separator) equalities.}
For each edge $(a,b)$ in the clique tree we enforce moment consistency on the separator $S_{ab}=I_a\cap I_b$. Matching the restricted moments up to degree $2k$ is equivalent to matching the shared moments between both moment matrices, which is equivalent to a restricted moment matrix of size
\begin{equation}
s_h=\binom{|S_{ab}|+k}{k}\ \leq\ \binom{r+1+k}{k}.
\end{equation}

This introduces $\tfrac{1}{2}s_h(s_h+1)$ scalar equalities per separator. Since a clique tree has $N-1=\mathcal{O}(nr)$ edges (one edge per node, except the root node), we obtain the count
\[
\mathcal{O}\Big(nr\,\binom{r+1+k}{k}^{2}\Big).
\]
At the minimal $k=\lceil\frac{d+1}{2}\rceil$, $\mathcal{O}(\binom{r+1+k}{k})=\mathcal{O}(r^k)$, so the initial relaxation order scales like $\mathcal{O}(n\,r^{d+2})$.

\medskip
\noindent\textbf{Lifting equalities.}
The lifted problem contains $rn$ polynomials $h_{l,i}=0$. In the moment side we impose
$L_{y^{(a)}}(q\,h_{l,i})=0$ for all monomials $q$ supported on a specific clique $I_a$ with $\deg(q\,h_{l,i})\leq 2k$. If $\deg f_{l,i}=d$, then $\deg h_{l,i}=d+1$, so the number of monomials per equality is
\begin{equation}
t_h\ =\ \binom{|I_a|+(2k-(d+1))}{\,2k-(d+1)\,}\ \le\ \binom{r+1+2k-d}{\,2k-d-1\,}.
\end{equation}
With $nr$ equalities $h_{l,i}$, this yields a total of
\[
\mathcal{O}\big(nr\,t_h\big)
\ =\ \mathcal{O}\Big(nr\,\binom{r+1+2k-d}{\,2k-d-1\,}\Big).
\]
At the minimal $k$ above, $t_h=\Theta(r^2)$, so the initial relaxation order scales like $\mathcal{O}(n\,r^{3})$. Therefore, in total we have $\mathcal{O}\Big(nr^{d+2}\Big)$ equalities for the initial relaxation order.

The cost of the LRPOP hierarchy scales only linearly with the number of variables, while the rank is the main driver of the cost. For the dense and sparse hierarchies, the number of variables (or monomials) of the cliques drive the cost.

\section{Numerical examples}
\label{sec:examples}

Let us consider examples for varying parameters. We want to study both how LRPOP compares to the dense hierarchy as we scale the parameters of the polynomials as well as how far we can go in the number of variables while keeping the rank low. For both studies we use the box constraints $x_i^2\leq 1$.  The numerical experiments were performed using the \textit{Julia} \cite{Julia-2017} packages \textit{LRPOP} \cite{lrpop} and \textit{TSSOS} \cite{Magron2021}. The SDP solver \textit{Mosek} \cite{Mosek} and \textit{CliqueTrees} \cite{cliquetrees2025samuelson} package were also a fundamental part of the code. The computer was a laptop with a CPU AMD Ryzen 7 PRO 7840U w/ Radeon 780M, 8 cores, base speed 3.30 GHz, and 32.0 GB of RAM.

\subsection{Comparison to dense hierarchy}

Starting with rank-1 polynomials, $f(\mathbf{x})=\prod_{i=1}^n f_i(x_i)$, for $f_i$ a univariate polynomial of degree~$d$. We pick the coefficients of the polynomials randomly: the coefficient for the $k$-th degree term is drawn from a normal distribution $\mathcal{N}(0, 0.7^{2k})$, and each $f_i$ is subsequently normalized such that $\sum_{k=0}^d |c_{i,k}| = 1$. The decay in the larger-degree coefficients will induce better conditioning for the numerical study. We fix a degree $d$, then vary $n$ and the relaxation order of the hierarchy, and we compare this to the dense hierarchy by using the TSSOS \cite{Magron2021} Julia package. The relaxation order of the low-rank method, $k_{\mathrm{LR}}$, will go from $\lceil\frac{d+1}{2}\rceil$, and we run TSSOS with the minimal relaxation order, $k_{\mathrm{dense}}=\lceil\frac{nd}{2}\rceil$. For the rank-1 example we fix the degree of the polynomials to $d=3$, with varying $n\geq 2$. If the computation takes more than 300 seconds we stop it and report a blank space in the tables of results.

\begin{table}[H]
\centering
\setlength{\tabcolsep}{5pt}
\renewcommand{\arraystretch}{1.1}
\caption{Optimal values for rank $r=1$, single polynomial degree $d=3$, and relaxation orders $k_{\mathrm{LR}}$, for LRPOP, and $k_{\mathrm{dense}}=\lceil\frac{nd}{2}\rceil$, for the dense hierarchy with TSSOS.}
\label{tab:opt_r1d3}
\begin{tabular}{
    l
    @{\hskip 10pt}
    *{5}{r}
}
\toprule
\#\,variables $n$ & {2} & {3} & {4} & {5} & {6} \\
\midrule
$k_{\mathrm{LR}}=2$ &  -0.710558 & -3.692186 & -3.238414 & -2.445621 & -7.363192\\
$k_{\mathrm{LR}}=3$ & -0.710559 & -0.305038 & -0.178236 & -0.030005 & -0.020383 \\

\midrule
Dense & -0.710559 & -0.305039 & -0.178236 &  &  \\
\bottomrule
\end{tabular}
\end{table}

\begin{table}[H]
\centering
\setlength{\tabcolsep}{5pt}
\renewcommand{\arraystretch}{1.1}
\caption{Running times (seconds) for rank $r=1$, single polynomial degree $d=3$, and relaxation orders $k_{\mathrm{LR}}$, for LRPOP, and $k_{\mathrm{dense}}=\lceil\frac{nd}{2}\rceil$, for the dense hierarchy with TSSOS.}
\label{tab:times_r1d3}
\begin{tabular}{
    l
    @{\hskip 10pt}
    *{5}{r}
}
\toprule
\#\,variables $n$ & {2} & {3} & {4} & {5} & {6} \\
\midrule
$k_{\mathrm{LR}}=2$ &  0.016  & 0.069  & 0.061  & 0.086  & 0.190 \\
$k_{\mathrm{LR}}=3$ & 0.052  & 0.070  & 0.082  & 0.141  & 0.106  \\

\midrule
Dense & 0.011  & 0.373  & 10.33  &  &  \\
\bottomrule
\end{tabular}
\end{table}

In Table \ref{tab:opt_r1d3} we see that we can reach the dense approximate value with a much lower relaxation order, and therefore also much faster. The SDP for the dense hierarchy does not solve it in less than 5 minutes for $d=3, n=5$, which we try to compute with the minimal relaxation order $k_{\mathrm{dense}}=8$, while we get what seems the optimal value for the LRPOP method with a relaxation order $k_{LR}=3$ within very short time, as we see in Table ~\ref{tab:times_r1d3}. The main advantage of the low-rank method is that we can get lower bounds with very little computational cost and for very large problems.

Now let us look at a polynomial with rank $r=2$ and degree $d=2$, and varying $n$. We show the results for the optimum values in Table~\ref{tab:opt_r2d2} and the running times in Table~\ref{tab:times_r2d2}.

\begin{table}[H]
\centering
\setlength{\tabcolsep}{5pt}
\renewcommand{\arraystretch}{1.1}
\caption{Optimal values for rank $r=2$, single polynomial degree $d=2$, and relaxation orders $k_{\mathrm{LR}}$, for LRPOP, and $k_{\mathrm{dense}}=\lceil\frac{nd}{2}\rceil$, for the dense hierarchy with TSSOS.}
\label{tab:opt_r2d2}
\begin{tabular}{
    l
    @{\hskip 10pt}
    *{5}{r}
}
\toprule
 \#\,variables $n$ & {2} & {3} & {4} & {5} & {6} \\
\midrule
$k_{\mathrm{LR}}=2$ & -1.280796 & -0.567000 & -1.083653 & -0.624894 & -0.610669\\
$k_{\mathrm{LR}}=3$ & -1.280796 & -0.482480 & -1.016807 & -0.234330 & -0.512391\\

\midrule
Dense & -1.280796 & -0.482480 & -1.016815 & -0.213803 &  \\
\bottomrule
\end{tabular}
\end{table}

\begin{table}[H]
\centering
\setlength{\tabcolsep}{5pt}
\renewcommand{\arraystretch}{1.1}
\caption{Running times (seconds) for rank $r=2$, single polynomial degree $d=2$, and relaxation orders $k_{\mathrm{LR}}$, for LRPOP, and $k_{\mathrm{dense}}=\lceil\frac{nd}{2}\rceil$, for the dense hierarchy with TSSOS.}
\label{tab:times_r2d2}
\begin{tabular}{
    l
    @{\hskip 10pt}
    *{5}{r}
}
\toprule
 \#\,variables $n$ & {2} & {3} & {4} & {5} & {6} \\
\midrule
$k_{\mathrm{LR}}=2$ & 0.027  & 0.571  & 0.075  & 0.108  & 0.133  \\
$k_{\mathrm{LR}}=3$ & 0.180 & 0.502  & 0.621  & 0.865 & 1.098 \\

\midrule
Dense & 0.008  & 0.065  & 0.328  & 10.36  &  \\
\bottomrule
\end{tabular}
\end{table}

We further extend this to higher rank, $r=4$, in Table~\ref{tab:opt_r4d3} and see that we still converge for low relaxation orders. However, the complexity is reflected in the running times to obtain the optimum, which are depicted in Table~\ref{tab:times_r4d3}.

\begin{table}[H]
\centering
\setlength{\tabcolsep}{5pt}
\renewcommand{\arraystretch}{1.1}
\caption{Optimal values for rank $r=4$, single polynomial degree $d=2$, and relaxation orders $k_{\mathrm{LR}}$, for LRPOP, and $k_{\mathrm{dense}}=\lceil\frac{nd}{2}\rceil$, for the dense hierarchy with TSSOS.}
\label{tab:opt_r4d3}
\begin{tabular}{
    l
    @{\hskip 10pt}
    *{5}{r}
}
\toprule
 \#\,variables $n$ & {2} & {3} & {4} & {5} & {6} \\
\midrule
$k_{\mathrm{LR}}=2$ & -1.321912 & -1.070262 & -1.725488 & -2.777503 & -1.597545 \\
$k_{\mathrm{LR}}=3$ & -1.321909 & -0.867154 & -0.918308 & -0.372227 & -0.512062 \\

\midrule
Dense & -1.321913 & -0.867158 & -0.918311 & -0.372247  &  \\
\bottomrule
\end{tabular}
\end{table}

\begin{table}[H]
\centering
\setlength{\tabcolsep}{5pt}
\renewcommand{\arraystretch}{1.1}
\caption{Running times (seconds) for rank $r=4$, single polynomial degree $d=2$, and relaxation orders $k_{\mathrm{LR}}$, for LRPOP, and $k_{\mathrm{dense}}=\lceil\frac{nd}{2}\rceil$, for the dense hierarchy with TSSOS.}
\label{tab:times_r4d3}
\begin{tabular}{
    l
    @{\hskip 10pt}
    *{5}{r}
}
\toprule
 \#\,variables $n$ & {2} & {3} & {4} & {5} & {6} \\
\midrule
$k_{\mathrm{LR}}=2$ & 0.029  & 0.596  & 1.093  & 1.489  & 2.480\\
$k_{\mathrm{LR}}=3$ & 0.233  & 1.305  & 11.29  & 32.84  & 80.39 \\

\midrule
Dense & 0.009  & 0.032  & 0.455  & 12.33  &  \\
\bottomrule
\end{tabular}
\end{table}

These examples all indicate that, for LRPOP, increasing the rank of the polynomial is the main driver of the complexity of the problem.

\subsection{Large-scale examples}

We can further test the scalability of LRPOP by seeing how far we can increase the number of variables. Increasing $n$ can generate ill-conditioned problems that are very challenging to solve without the use of higher precision arithmetic. To mitigate these effects, we use Bernstein polynomials \cite{FAROUKI2012379} to represent the POP. This allows for more stability in the numerics; see Appendix~\ref{bases} for details.

For $d\in \mathbb{N}$, the univariate Bernstein polynomials on $[0,1]$ are 
\begin{equation}
    B_{j,d}(s)=\binom{d}{j}s^j(1-s)^{d-j}, \quad j=0,\dots,d.
\end{equation}

They satisfy $\sum_{j=0}^d B_{j,d}(s)= 1$ for all $s$ and are nonnegative; therefore if we take any polynomial
\begin{equation}
    q(s)=\sum_{j=0}^d b_j B_{j,d}(s)
\end{equation}
then it satisfies $\min_j b_j \leq q(s)\leq \max_j b_j$ for all $s\in[0,1]$. For our numerical experiment we have the box constraints $x_i^2\leq1$ so we define the variables $s_i=(x_i+1)/2\in [0,1]$. We can write our univariate polynomials as $f_{l,i}(x_i)=\sum_{j=0}^d b_{l,i,j} B_{j,d}(s_i)$. For each rank $l$, we pick the coefficients 
\[
    b_{l,i,0}=1, \quad b_{l,i,j}\in [1+\tfrac{\delta}{n},1+\tfrac{2\delta}{n}],
\]

uniformly at random for $j=1,\dots, d$, and we let $\delta=1$. Picking $b_{l,i,0}=1$ ensures that the minimum of each univariate polynomial $f_{l,i}$ is $1$ and is attained at a common point $x_i = -1$. Therefore, the minimum of the full rank-$r$ polynomial is $\sum_{l=1}^r 1=r$. For example, for a polynomial of rank 2 we expect a global minimum $p^*=2$.

\begin{table}[H]
\centering
\setlength{\tabcolsep}{5pt}
\renewcommand{\arraystretch}{1.1}
\caption{Optimal values and computation times for $r=2$, $d=2$, $k_{\mathrm{LR}}=2$
and varying numbers of variables $n$ using LRPOP.}
\label{tab:bernstein_r2d2}
\begin{tabular}{
    l
    @{\hskip 10pt}
    *{5}{r}
}
\toprule
\#\,variables $n$ & $10$ & $50$ & $200$ & $500$ & $1000$ \\
\midrule
$k_{\mathrm{LR}}=2$ &  2.000016 &  2.000174 & 2.002931 & 2.015252 & 2.014466 \\
time (sec) & 1.141  & 5.980  & 26.44  & 49.33  & 112.6  \\
\bottomrule
\end{tabular}
\end{table}

Table~\ref{tab:bernstein_r2d2} summarizes the results. We note that even if theoretically we have proven that LRPOP converges to the global optimum via lower bounds, we get values higher than the optimum. This is due to the numerical conditioning of the problem for large number of variables. In the case $n=1000$ these numerical errors are less than $1\%$. The running time of LRPOP scales linearly with $n$, as expected from our complexity study.

\begin{table}[H]
\centering
\setlength{\tabcolsep}{5pt}
\renewcommand{\arraystretch}{1.1}
\caption{Optimal values and computation times for $r=3$, $d=2$, $k_{\mathrm{LR}}=2$ and varying numbers of variables $n$ using LRPOP.}
\label{tab:bernstein_r3d2}
\begin{tabular}{
    l
    @{\hskip 10pt}
    *{5}{r}
}
\toprule
\#\,variables $n$ & $10$ & $50$ & $200$ & $500$ & $1000$ \\
\midrule
$k_{\mathrm{LR}}=2$ &  3.000053 &  3.000692 & 3.019781 & 3.268124 & 3.284973 \\
time (sec) & 3.934 & 16.50 & 65.75  & 173.2  & 373.8  \\
\bottomrule
\end{tabular}
\end{table}

In this last example, summarized in Table~\ref{tab:bernstein_r3d2}, we expect a global minimum of $p^*=3$, and for $n=1000$ variables we see we have a $10\%$ error. This indicates the limits of LRPOP in terms of numerical conditioning.

These examples show how, for low-rank polynomials (e.g. $r=1,2,3$), LRPOP can compute the global minimum of polynomials with large number of variables and total degree. Our examples show this for 1000 variables and total degree of 2000, considering these are polynomials tailored specifically to have both low rank and good numerical conditioning.

\section{Conclusions and perspectives}\label{sec:conclusion}

In this paper, we introduced a novel approach to polynomial optimization by identifying and exploiting a low-rank tensor structure. This structure, distinct from correlative or term sparsity, provides a new avenue for tackling the scalability of the moment-SOS hierarchy. By tailoring the relaxations to this low-rank format, we open the door to solving a new class of large-scale problems where the underlying model, while dense with respect to existing notions of sparsity, possesses a low-rank structure of this kind. There are other rank-structured ways of writing polynomials, but not all may allow for moment-SOS hierarchies of this type. Exploring other low-rank structures by considering alternative \emph{tensor decompositions} is a natural way to continue this work and may widen the range of applications.

Another interesting direction for future research is to extend this low-rank approach from polynomial optimization to the broader class of \emph{rational optimization}. Minimizing rational functions is fundamental in many areas, including control theory, economics, and engineering design. While the moment-SOS hierarchy can be adapted for rational problems, it faces the same scalability issues.
How to address correlative sparsity for rational optimization was addressed in \cite{Bugarin2015}.
A promising perspective lies in bridging our low-rank optimization framework with modern data-driven approximation techniques that natively generate low-rank rational models. The {Loewner framework} has emerged as a powerful, data-driven method for system identification and function approximation \cite{Antoulas2005, Mayo2007}. Its extensions to the multivariate case are particularly relevant, as they construct rational approximations directly from function samples or data \cite{Gosea2022handbook,Antoulas2022sirev}. The core idea of this framework is to build a low-rank rational interpolant that, by construction, often takes a separable, low-rank tensor form analogous to the CP decomposition:
\[
f(x) \approx \sum_{l=1}^r \prod_{i=1}^n f_{l,i}(x_i)
\]
where the $f_{l,i}$ are now univariate rational functions.  A natural research question is, therefore, to develop a dedicated moment-SOS hierarchy that can efficiently solve optimization problems where the objective function or constraints are given in this low-rank rational form.

\section*{Acknowledgements}
We are grateful to Charles Poussot-Vassal for insightful information on tensor approximation and the Loewner framework.
The authors acknowledge the use of AI for assistance with brainstorming ideas, mathematical development, coding and drafting the manuscript. The final content, analysis and conclusions remain the sole responsibility of the authors. 

This work has been supported by European Union’s HORIZON–MSCA-2023-DN-JD programme under the Horizon Europe (HORIZON) Marie Skłodowska-Curie Actions, grant agreement 101120296 (TENORS). This work was also funded by the European Union under the project ROBOPROX (reg.~no.~CZ.02.01.01/00/22\_008/0004590).

\printbibliography{}

\appendix

\section{Vertex elimination ordering}
\label{PEO}

Here we illustrate the vertex elimination order defined in the proof of Theorem~\ref{TW} for $r=2, n=5$.

\tikzset{
  >=stealth,
  vtx/.style={circle, draw, thick, minimum size=6mm, inner sep=1pt, fill=white},
  solidedge/.style={thick},
  dashededge/.style={thick, dashed, line cap=round},
  ghost/.style={circle, draw=gray!60, thick, minimum size=6mm, inner sep=1pt, fill=gray!15},
  every node/.style={font=\small}
}

\newcommand{\BaseGraph}{

  \foreach \i/\x in {1/0, 2/1.8, 3/3.6, 4/5.4, 5/7.2} {
    \node[vtx] (t1\i) at (\x,  1.8) {$t_{1,\i}$};
    \node[vtx] (x\i)  at (\x,  0.0) {$x_{\i}$};
    \node[vtx] (t2\i) at (\x, -1.8) {$t_{2,\i}$};
  }

  \foreach \i in {1,2,3,4,5}{
    \draw[solidedge] (t1\i)--(x\i)--(t2\i);
  }
  \draw[solidedge] (t11)--(t12)--(t13)--(t14)--(t15);
  \draw[solidedge] (t21)--(t22)--(t23)--(t24)--(t25);
  \foreach \i/\j in {1/2,2/3,3/4,4/5}{
    \draw[solidedge] (t1\i) -- (x\j);
    \draw[solidedge] (t2\i) -- (x\j);
  }
}

\begin{figure}[H]
\centering
\begin{tikzpicture}
  \BaseGraph
  \node at (3.7,-3.0) {\textbf{Step 0:} original $G_{2,5}$};
\end{tikzpicture}
\end{figure}

\begin{figure}[H]
\centering
\begin{tikzpicture}
  \BaseGraph
  \node[ghost] at (t15) {$t_{1,5}$};
  \node at (3.7,-3.0) {\textbf{Step 1:} eliminate $t_{1,5}$};
\end{tikzpicture}
\end{figure}

\begin{figure}[H]
\centering
\begin{tikzpicture}
  \BaseGraph
  \node[ghost] at (t15) {$t_{1,5}$};
  \node[ghost] at (t25) {$t_{2,5}$};
  \node at (3.7,-3.0) {\textbf{Step 2:} eliminate $t_{2,5}$};
\end{tikzpicture}
\end{figure}

\begin{figure}[H]
\centering
\begin{tikzpicture}
  \BaseGraph
  \node[ghost] at (t15) {$t_{1,5}$};
  \node[ghost] at (t25) {$t_{2,5}$};
  \node[ghost] at (x5)  {$x_{5}$};

  \draw[dashededge]
    (t14) .. controls +(0.8,0) and +(0.8,0) .. (t24);
  \node at (3.7,-3.0) {\textbf{Step 3:} eliminate $x_{5}$, add edge $(t_{1,4},t_{2,4})$};
\end{tikzpicture}
\end{figure}

\begin{figure}[H]
\centering
\begin{tikzpicture}
  \BaseGraph
  \node[ghost] at (t15) {$t_{1,5}$};
  \node[ghost] at (t25) {$t_{2,5}$};
  \node[ghost] at (x5)  {$x_{5}$};

  \draw[dashededge]
    (t14) .. controls +(0.8,0) and +(0.8,0) .. (t24);

  \draw[dashededge] (t13) -- (t24);

  \node[ghost] at (t14) {$t_{1,4}$};
  \node at (3.7,-3.0) {\textbf{Step 4:} eliminate $t_{1,4}$, add edge $(t_{1,3},t_{2,4})$};
\end{tikzpicture}
\end{figure}

\begin{figure}[H]
\centering
\begin{tikzpicture}
  \BaseGraph
  \node[ghost] at (t15) {$t_{1,5}$};
  \node[ghost] at (t25) {$t_{2,5}$};
  \node[ghost] at (x5)  {$x_{5}$};
  \node[ghost] at (t14) {$t_{1,4}$};

  \draw[dashededge]
    (t14) .. controls +(0.8,0) and +(0.8,0) .. (t24);
  \draw[dashededge] (t13) -- (t24);

  \draw[dashededge]
    (t13) .. controls +(0.8,0) and +(0.8,0) .. (t23);

  \node[ghost] at (t24) {$t_{2,4}$};
  \node at (3.7,-3.0) {\textbf{Step 5:} eliminate $t_{2,4}$, add edge $(t_{1,3},t_{2,3})$};
\end{tikzpicture}
\end{figure}

\begin{figure}[H]
\centering
\begin{tikzpicture}
  \BaseGraph
  \node[ghost] at (t15) {$t_{1,5}$};
  \node[ghost] at (t25) {$t_{2,5}$};
  \node[ghost] at (x5)  {$x_{5}$};
  \node[ghost] at (t14) {$t_{1,4}$};
  \node[ghost] at (t24) {$t_{2,4}$};

  \draw[dashededge]
    (t14) .. controls +(0.8,0) and +(0.8,0) .. (t24);
  \draw[dashededge] (t13) -- (t24);
  \draw[dashededge]
    (t13) .. controls +(0.8,0) and +(0.8,0) .. (t23);

  \node[ghost] at (x4) {$x_{4}$};
  \node at (3.7,-3.0) {\textbf{Step 6:} eliminate $x_{4}$};
\end{tikzpicture}
\end{figure}

We proceed in a similar manner until there are no vertices left in the graph.

\section{Bipartite graph minor}
\label{Bipartite}

We illustrate the bipartite graph minor defined in the proof of Theorem~\ref{TW} for $r=3,n=4$. We can visualize $G_{r,n}$ as a series of rows of $t_{l,i}$ for $i=1,\dots,n$ with a bottom row of $x_i$. Note that this is different from the pictures we did until now because when the rank is larger than two the following is a more natural arrangement of the vertices. 

\begin{figure}[H]
\centering
\begin{tikzpicture}[scale=0.85]

  \tikzstyle{vtx}=[circle, draw, thick, fill=white, inner sep=0pt, minimum size=20pt]
  \tikzstyle{solidedge}=[thick]
  \tikzstyle{rowbox}=[dashed, cyan, thick]

  \foreach \i/\x in {1/0, 2/2.5, 3/5, 4/7.5} {

    \node[vtx] (x\i) at (\x - 1.25, -2.5) {$x_{\i}$};

    \node[vtx] (t3\i) at (\x, -0.5) {$t_{3,\i}$};
    \node[vtx] (t2\i) at (\x,  1.5) {$t_{2,\i}$};
    \node[vtx] (t1\i) at (\x,  3.5) {$t_{1,\i}$};
  }

  \foreach \i in {1,2,3,4}{
    \draw[solidedge] (t3\i) -- (x\i);
    \draw[solidedge] (t2\i) -- (x\i);
    \draw[solidedge] (t1\i) -- (x\i);
  }

  \foreach \i/\j in {1/2, 2/3, 3/4}{
    \foreach \l in {1,2,3}{
      \draw[solidedge] (t\l\i) -- (t\l\j);
    }
  }

  \foreach \i/\j in {1/2, 2/3, 3/4}{
    \foreach \l in {1,2,3}{
      \draw[solidedge] (t\l\i) -- (x\j);
    }
  }

  \draw[rowbox] (-0.7, 2.6) rectangle (8.2, 4.4);
  \node[anchor=west, cyan] at (8.3, 3.5) {$\text{contract to } R_1$};

  \draw[rowbox] (-0.7, 0.6) rectangle (8.2, 2.4);
  \node[anchor=west, cyan] at (8.3, 1.5) {$\text{contract to } R_2$};

  \draw[rowbox] (-0.7, -1.4) rectangle (8.2, 0.4);
  \node[anchor=west, cyan] at (8.3, -0.5) {$\text{contract to } R_3$};

  \node at (4.5, -4.0) {$G_{3,4}$};

\end{tikzpicture}
\end{figure}

\begin{figure}[H]
\centering
\begin{tikzpicture}[scale=1.0]

  \tikzstyle{vtx}=[circle, draw, thick, fill=white, inner sep=0pt, minimum size=20pt]
  \tikzstyle{solidedge}=[thick]

  \node[vtx] (R1) at (0,  2.0) {$R_1$};
  \node[vtx] (R2) at (0,  0.0) {$R_2$};
  \node[vtx] (R3) at (0, -2.0) {$R_3$};

  \foreach \i/\y in {1/2.25, 2/0.75, 3/-0.75, 4/-2.25} {
    \node[vtx] (X\i) at (5, \y) {$x_{\i}$};
  }

  \foreach \r in {1,2,3} {
    \foreach \x in {1,2,3,4} {
        \draw[solidedge] (R\r) -- (X\x);
    }
  }

  \node at (2.5, -3.5) {$K_{3,4}\text{ minor}$};

\end{tikzpicture}
\end{figure}

\section{Clique from separator}\label{separator}

We examine the elimination of vertex $v=t_{2,i-1}$ (yellow). Solid black lines indicate edges present in the original graph $G_{r,n}$. Blue dashed lines indicate fill-in edges required because column $S_{i-1}$ is a minimal separator. When $v$ is eliminated, it must form a clique with all its current neighbors (blue background). The gray dashed lines represent the new edges from this elimination. Consequently, the set $S_{i-1} \cup \{x_i\} \cup \{t_{2,i}\}$ forms a clique of size at least $r+2$ (5 in this example).

\begin{figure}[H]
\centering
\begin{tikzpicture}[scale=1.1]

  \tikzstyle{vtx}=[circle, draw, thick, fill=white, inner sep=0pt, minimum size=24pt]
  \tikzstyle{origedge}=[thick, black]
  \tikzstyle{fillinedge}=[thick, dashed, blue]
  \tikzstyle{impliededge}=[thick, dashed, gray]
  \tikzstyle{cliquebg}=[fill=blue!10, rounded corners=15pt]

  \draw[cliquebg, draw=none] 
    (-0.8, 3.8) -- (0.8, 3.8) -- (3.8, 1.5) -- (3.8, 0.5) -- (1.5, -3.2) -- (-1.5, -3.2) -- cycle;

  \node[vtx, fill=cyan!20] (t1_prev) at (0, 3) {$t_{1,i-1}$};
  \node[vtx, fill=cyan!20] (t3_prev) at (0, -1) {$t_{3,i-1}$};

  \node[vtx, fill=yellow!50] (v) at (0, 1) {$t_{2,i-1}$};

  \node[vtx] (t1_curr) at (3, 3) {$t_{1,i}$};
  \node[vtx, fill=cyan!20] (t2_curr) at (3, 1) {$t_{2,i}$};
  \node[vtx] (t3_curr) at (3, -1) {$t_{3,i}$};

  \node[vtx, fill=cyan!20] (xi) at (1.5, -2.5) {$x_i$};

  \draw[origedge] (t1_prev) -- (t1_curr);
  \draw[origedge] (v) -- (t2_curr);
  \draw[origedge] (t3_prev) -- (t3_curr);

  \draw[origedge] (t1_prev) -- (xi);
  \draw[origedge] (v) -- (xi);
  \draw[origedge] (t3_prev) -- (xi);

  \draw[origedge] (t1_curr) -- (xi);
  \draw[origedge] (t2_curr) -- (xi);
  \draw[origedge] (t3_curr) -- (xi);

  \draw[fillinedge] (t1_prev) -- (v);
  \draw[fillinedge] (t3_prev) -- (v);
  \draw[fillinedge] (t1_prev) to[bend right=45] (t3_prev);

  \draw[impliededge] (t1_prev) -- (t2_curr);
  \draw[impliededge] (t3_prev) -- (t2_curr);

  \node[anchor=south, font=\bfseries] at (0, 3.5) {$S_{i-1}$};
  \node[anchor=south, font=\bfseries] at (3, 3.5) {$S_i$};

  \node at (1.5, -3.8) {$S_{i-1} \cup S_i \cup \{x_i\}$};

\end{tikzpicture}
\label{fig:separator}
\end{figure}

\section{Bernstein polynomials and ill-conditioning}
\label{bases}

When minimizing polynomials that are sums of products of univariate factors (our low-rank
format) the choice of basis matters numerically. The monomial basis $\{1,x,\dots,x^d\}$, while algebraically simple, tends
to be numerically ill-conditioned for high-degree optimization problems on intervals. The Bernstein basis \cite{FAROUKI2012379} avoids this poor scaling
by its structure and total nonnegativity. We analyze the sensitivity of the mapping $\Phi$ from the coefficient space to the function values. A large Lipschitz constant for $\Phi$ implies that small numerical errors in the SDP variables correspond to large deviations in the objective, leading to solver failure.

We consider the map $\Phi: \mathbf{b} \mapsto f$ where $f$ is a polynomial in our low-rank form (CP decomposition). We bound the Lipschitz constant induced by the $\ell_\infty$ norm on coefficients and the $L^\infty$ norm on functions:
\[
\Lambda(\Phi) = \sup_{\mathbf{b} \neq \mathbf{b}'} \frac{\|f_{\mathbf{b}} - f_{\mathbf{b}'}\|_{L^\infty}}{\|\mathbf{b} - \mathbf{b}'\|_\infty}.
\]

The Bernstein basis offers optimal stability independent of degree $d$. For $x\in[-1,1]$, via the map $s(x) = (x+1)/2\in[0,1]$, the basis is $B_{j,d}(x) = \binom{d}{j} s(x)^j (1-s(x))^{d-j}$. Due to the partition of unity and non-negativity, it satisfies the following.

\begin{lemma}[Univariate Stability]
Let $q(x) = \sum_{j=0}^d b_j B_{j,d}(x)$. The mapping $b \mapsto q$ has a Lipschitz constant of 1:
\[
\|q_{b} - q_{b'}\|_{L^\infty} \le \|b - b'\|_\infty.
\]
\end{lemma}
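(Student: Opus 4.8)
The plan is to exploit directly the two structural features of the Bernstein basis that are already highlighted in the excerpt: nonnegativity of each $B_{j,d}$ on $[0,1]$, and the partition-of-unity identity $\sum_{j=0}^d B_{j,d}(s) = 1$. Since the map $b \mapsto q$ is linear, the difference $q_b - q_{b'}$ is itself a Bernstein expansion with coefficient vector $b - b'$, namely $q_b(x) - q_{b'}(x) = \sum_{j=0}^d (b_j - b_j') B_{j,d}(x)$. The whole statement therefore reduces to a pointwise bound on such an expansion.

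First I would fix an arbitrary $x \in [-1,1]$, equivalently $s = (x+1)/2 \in [0,1]$, and apply the triangle inequality together with the nonnegativity $B_{j,d}(x) \ge 0$ to write
\[
\bigl| q_b(x) - q_{b'}(x) \bigr| \;\le\; \sum_{j=0}^d |b_j - b_j'| \, B_{j,d}(x).
\]
Next I would bound each coefficient difference by the supremum, $|b_j - b_j'| \le \|b - b'\|_\infty$, and pull this constant out of the sum. The remaining factor $\sum_{j=0}^d B_{j,d}(x)$ collapses to $1$ by the partition-of-unity property, giving $|q_b(x) - q_{b'}(x)| \le \|b - b'\|_\infty$ at the chosen point. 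Since the right-hand side does not depend on $x$, taking the supremum over $x \in [-1,1]$ yields $\|q_b - q_{b'}\|_{L^\infty} \le \|b - b'\|_\infty$, which is exactly the claimed Lipschitz bound with constant $1$.

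There is essentially no obstacle here: the argument is a one-line estimate once the two basis properties are in hand, and both are stated in the paragraph preceding the lemma. The only point deserving a remark is that the bound is tight (the Lipschitz constant equals $1$, not merely is bounded by it), which one can confirm by perturbing a single coefficient $b_j$ and evaluating at the point where $B_{j,d}$ attains its maximum, though this sharpness check is not required for the stated inequality. If desired, I would close by noting that the same computation gives the matching lower bound, so that $\Lambda(\Phi) = 1$ in the univariate case, foreshadowing how the multivariate low-rank map inherits a degree-independent conditioning estimate.
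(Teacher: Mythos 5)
Your argument is correct and coincides with the paper's own proof: both use linearity, the triangle inequality with nonnegativity of the $B_{j,d}$, the bound $|b_j-b_j'|\le\|b-b'\|_\infty$, and the partition-of-unity identity to collapse the sum. The additional remark on sharpness of the constant is a nice observation but not part of the paper's proof.
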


\begin{proof}
For any $x \in [-1,1]$:
\[
|q_{b}(x) - q_{b'}(x)| = \left|\sum_{j=0}^d (b_j - b'_j) B_{j,d}(x)\right| \le \max_j |b_j - b'_j| \sum_{j=0}^d B_{j,d}(x) = \|b - b'\|_\infty.
\]
\end{proof}

In contrast, the monomial basis is exponentially unstable ($O(2^d)$). Representing bounded functions often requires coefficients of magnitude $2^d$ (the cancellation phenomenon), destroying the conditioning of the moment matrices.

Now, we consider the full objective function in the low-rank sum-of-products format:
\[
f(\mathbf{x}) = \sum_{l=1}^r \underbrace{\prod_{i=1}^n f_{l,i}(x_i)}_{T_l(\mathbf{x})}.
\]
The stability of the mapping from the coefficient vector $\mathbf{b}$ to $f$ depends on the dimension $n$, the rank $r$, and the coefficient bound $M = \|\mathbf{b}\|_\infty$.

\begin{lemma}[LR Coefficient Stability]
If the coefficients of every univariate factor $f_{l,i}$ are bounded by $M$, the Lipschitz constant of the map $\mathbf{b} \mapsto f$ satisfies:
\[
\Lambda_{LR} \le r n  M^{n-1}.
\]
\end{lemma}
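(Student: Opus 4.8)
The plan is to reduce the multivariate, rank-$r$ estimate to the univariate stability bound already established, by combining three ingredients: the telescoping identity for a difference of products, the uniform bound $\|f_{l,i}\|_{L^\infty}\le M$ coming from the partition-of-unity property of the Bernstein basis, and the previous univariate Lipschitz estimate $\|f_{l,i}-f_{l,i}'\|_{L^\infty}\le\|\mathbf{b}-\mathbf{b}'\|_\infty$.

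First I would treat a single rank-one term $T_l(\mathbf{x})=\prod_{i=1}^n f_{l,i}(x_i)$ in isolation. Writing $T_l'$ for the same product built from the perturbed coefficients $\mathbf{b}'$, I would expand the difference with the standard one-factor-at-a-time telescoping identity
\[
T_l-T_l'=\sum_{i=1}^n\Bigl(\prod_{k=1}^{i-1}f_{l,k}\Bigr)\bigl(f_{l,i}-f_{l,i}'\bigr)\Bigl(\prod_{k=i+1}^{n}f_{l,k}'\Bigr),
\]
so that each summand isolates exactly one perturbed factor, flanked by $i-1$ untouched factors on one side and $n-i$ perturbed factors on the other. One checks this identity telescopes correctly (e.g. by induction on $n$, or directly for $n=2$).

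Next, taking the $L^\infty$ norm pointwise over the box $[-1,1]^n$ and using the triangle inequality, I would bound each of the $n$ summands. Here I need two facts. Nonnegativity together with the partition of unity of the Bernstein basis give $|f_{l,i}(x)|=\bigl|\sum_j b_{l,i,j}B_{j,d}(x)\bigr|\le\max_j|b_{l,i,j}|\le M$, so each of the $n-1$ untouched factors contributes at most $M$ in sup norm. The univariate stability lemma gives $\|f_{l,i}-f_{l,i}'\|_{L^\infty}\le\|b_{l,i}-b_{l,i}'\|_\infty\le\|\mathbf{b}-\mathbf{b}'\|_\infty$ for the single perturbed factor. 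Multiplying, each summand is at most $M^{n-1}\|\mathbf{b}-\mathbf{b}'\|_\infty$, and since there are $n$ of them, $\|T_l-T_l'\|_{L^\infty}\le nM^{n-1}\|\mathbf{b}-\mathbf{b}'\|_\infty$. Finally, summing the $r$ rank-one contributions with the triangle inequality yields $\|f_{\mathbf{b}}-f_{\mathbf{b}'}\|_{L^\infty}\le\sum_{l=1}^r\|T_l-T_l'\|_{L^\infty}\le rnM^{n-1}\|\mathbf{b}-\mathbf{b}'\|_\infty$, i.e. $\Lambda_{LR}\le rnM^{n-1}$.

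The argument is essentially routine once the telescoping is set up; the only step that genuinely uses the Bernstein structure (rather than being a generic product estimate) is the uniform bound $\|f_{l,i}\|_{L^\infty}\le M$, so I expect the main care to lie in justifying that bound and in the bookkeeping of the $M$-factors, ensuring each telescoped term carries exactly $n-1$ of them (hence the exponent $n-1$ rather than $n$).
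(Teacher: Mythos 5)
Your proof is correct and follows essentially the same route as the paper's: a telescoping decomposition of the difference of products, the uniform bound $\|f_{l,i}\|_{L^\infty}\le M$ from the Bernstein partition of unity, the univariate stability lemma for the single perturbed factor in each summand, and a final triangle inequality over the $r$ rank-one terms. Your version is slightly more explicit about the telescoping identity and about the fact that the flanking factors are a mix of perturbed and unperturbed ones (both of which need the bound $M$), but the argument is the same.
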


\begin{proof}
Let $\tilde{f}_{l,i}$ be factors with perturbed coefficients ($\|\delta\|_\infty \le \epsilon$).
First, for a single rank-1 term $T_l$, using the univariate stability and $\|f_{l,i}\|_\infty \le M$, we bound the product error via a telescoping sum:
\[
\|T_l - \tilde{T}_l\|_\infty \le \sum_{k=1}^n \|f_{l,k} - \tilde{f}_{l,k}\|_\infty \prod_{j \neq k} \|f_{l,j}\|_\infty \le \sum_{k=1}^n \epsilon M^{n-1} = n M^{n-1} \epsilon.
\]
Finally, summing over the rank $r$:
\[
\|f - \tilde{f}\|_\infty \le \sum_{l=1}^r \|T_l - \tilde{T}_l\|_\infty \le \sum_{l=1}^r (n M^{n-1} \epsilon) = r n  M^{n-1}  \epsilon.
\]
\end{proof}

Linear growth with $n$ is the theoretical lower bound for product structures. However, exponential growth in $n$ must be avoided. This is achieved by our initialization strategy.

By choosing Bernstein coefficients with gaps of $O(1/n)$, specifically $1\leq b_j \le 1 + \frac{2\delta}{n}$, we ensure that the coefficient bound $M$ depends on $n$. Substituting $M = 1 + \frac{2\delta}{n}$ into the stability lemma yields:
\[
\Lambda_{LR} \le r  n  \left(1 + \frac{2\delta}{n}\right)^{n-1}.
\]
Using the limit definition of the exponential function, $(1 + \frac{2\delta}{n})^{n-1} \le e^{2\delta}$. Thus:
\[
\Lambda_{LR} \le (r  e^{2\delta})  n.
\]
This certifies that the conditioning of our large-$n$ instances scales linearly with dimension (the term $e^{2\delta}$ is constant with respect to $n$), ensuring numerical solvability but still limited by the number of variables. This is exactly what we see in the results of Table~\ref{tab:bernstein_r2d2} and \ref{tab:bernstein_r3d2}, where the error of the minimum obtained increases with $n$, but not in an exponential way.

\end{document}